\numberwithin{equation}{section}
\newtheorem{theorem}{Theorem}[section]
\newtheorem{lemma}{Lemma}[section]
\newtheorem{definition}{Definition}[section]
\newtheorem{proposition}{Proposition}[section]
\begin{document}
	\title{{\sl The rotating periodic, spiral-like almost periodic and spiral-like almost automorphic solutions of Navier-Stokes equations with the Coriolis force}}
	
	\author{Ziying Chen $^{\mathcal{z},1}$, Yong Li $^{*,\mathcal{x},1,2}$ }

\renewcommand{\thefootnote}{}
\footnotetext{\hspace*{-6mm}
	
	\begin{tabular}{l l}
		$^{*}$~~~The corresponding author.\\
		$^\mathcal{z}$~~~E-mail address : zychen21@mails.jlu.edu.cn\\
		$^{\mathcal{x}}$~~~E-mail address : liyong@jlu.edu.cn\\
		$^{1}$~~~School of Mathematics, Jilin University, Changchun 130012, People's Republic of China.\\
		$^{2}$~~~School of Mathematics and Statistics, Center for Mathematics and Interdisciplinary Sciences,\\
		~~~ Northeast Normal University, Changchun 130024, People's Republic of China.
\end{tabular}}
\date{}
\maketitle
\begin{abstract}
    We consider the spatio-temporal periodic problem for the Navier-Stokes equations with a small external force in the rotational framework. We prove the existence and uniqueness of the rotating periodic, spiral-like almost periodic and spiral-like almost automorphic solutions of Navier-Stokes equations with the Coriolis force.\\
	\par\textbf{Keywords:} Navier-Stokes equations; Coriolis force; affine-periodic solution; spiral-like almost periodic solution; spiral-like almost automorphic solution 

\end{abstract}

   

	\section{Introduction}
	
	  We study the following Navier-Stokes equation with the Coriolis force:
	\begin{equation}
		\begin{cases}\label{1.1}
			\frac{\partial u}{\partial t} = \Delta u - le_{3}\times u - \left ( u\cdot \nabla  \right ) u - \nabla \pi  + f, \qquad &in\quad \mathbb{R} \times \mathbb{R} ^{3}, \\
			\nabla \cdot u=0,&in \quad \mathbb{R} \times \mathbb{R} ^{3},
		\end{cases}
	\end{equation}
where $ t\in \mathbb{R} , x\in \mathbb{R} ^{3} , u = u\left( t,x\right) = \left( u^{1}\left( t,x\right) ,u^{2}\left( t,x\right),u^{3}\left( t,x\right) \right)  $ denotes the velocity at $ \left( t,x\right)  $, $ \pi=\pi\left( t,x\right)  $ denotes the pressure at $ \left( t,x\right)  $, $ f = f\left( t,x\right) = \left( f^{1}\left( t,x\right) ,f^{2}\left( t,x\right),f^{3}\left( t,x\right) \right) $ denotes the external force. Here, $ l\in \mathbb{R}  $ represents the speed of rotation around the vertical unit vector $ e_{3}=\left( 0,0,1\right)  $, which is called the Coriolis parameter. 

It is well known that problems related to large-scale atmospheric and oceanic flows are dominated by rotational effects. The Coriolis force appears in many oceanographic and meteorological models. Therefore, it is of great importance to study the Navier-Stokes equations with Coriolis force. For the initial value problems of $ \eqref{1.1} $, Babin et al. \cite{ref1,ref2,ref3} and Chemin et al. \cite{ref4} proved the global existence and regularity of solutions to equation $ \eqref{1.1} $. The local existence of solutions to the Navier-Stokes equations with the Coriolis force can be found in \cite{ref24,ref23,ref25}. The global existence of solutions to the rotating Navier-Stokes equations has been studied by several authors. Giga et al. \cite{ref8,ref9} proved the existence of global solutions to the rotating Navier-Stokes equations for a class of spatially nondecaying initial data which includes a large class of almost periodic funtions. Yoneda \cite{ref13} obtained the existence of solutions to the rotating Navier–Stokes equations with spatially almost periodic large data provided by a sufficiently large Coriolis force for large times. For more studies of Navier-Stokes equations, see \cite{ref5,ref26,ref10,ref6} and the references therein.

Periodicity is a crucial property of the physical world. The periodic solution problems for various physical models have been extensively studied. Cheng and Li \cite{ref27} proved the existence and the uniqueness of time periodic solutions for a full hydrodynamic semiconductor model. Cai and Tan \cite{ref28} established the existence of weak time periodic solutions for the compressible Navier–Stokes–Poisson equations. And the time periodic problem of the Navier-Stokes equations in the rotational framework has been studied. Iwabuchi and Takada \cite{ref12} proved the existence and the uniqueness of a time periodic mild solution to $ \eqref{1.1} $ for the large time periodic external force $ f $ when the speed of rotation is sufficiently fast. Kozono, Mashiko and Takada \cite{ref15} showed that if the time periodic force $ f $ is uniformly small with respect to the Coriolis parameter, then there exists a unique time periodic mild solutions to $ \eqref{1.1} $ with the same period as $ f $. Koh, Lee and Takada \cite{ref11} proved the existence and uniqueness of time periodic solution to the rotating  Navier-Stokes equations for large initial velocity and large external force provided that the speed of rotation is sufficiently high. 

However, it is known that not all natural phenomena can be described alone by periodicity. Recently, ones introduced and studied affine-periodic solutions, see \cite{ref17}. Such solutions describe some kinds of spatio-temporal symmetric motions like dynamics of rigid body, spiral waves, Bloch waves and so on. Li and Huang \cite{ref18} discussed Levinson’s problem on affine-periodic solutions where the system is dissipative-repulsive. Zhang et al. \cite{ref22} obtained the existence of $ \left( Q,T\right) $-periodic patterns of reaction diffusion systems. For more research, see \cite{ref21,ref19,ref20}.

Although some recurrent properties of solutions of Navier-Stokes equations have been studied in \cite{ref16}, the affine-periodic solutions of Navier-Stokes equations with the Coriolis force have not been studied. If system $ \eqref{1.1} $ is a $ \left( Q,T\right) $-affine-periodic system, i.e. $ f\left( t+T,x \right) = Qf\left( t,Q^{-1}x \right), \pi \left ( t+T,x \right ) = \pi \left ( t,Q^{-1}x \right ) $, where
\begin{displaymath}
	Q = \begin{pmatrix}
		e^{\alpha }\cos \theta  & -e^{\alpha }\sin \theta  & 0 \\
		e^{\alpha }\sin \theta  & e^{\alpha }\cos \theta  & 0 \\
		0 & 0 & 1
	\end{pmatrix},\alpha \in \mathbb{R},\theta \in \left [ 0,2\pi \right ),T>0,
\end{displaymath}
let $ v\left( t,x\right) = Q^{-1}u\left( t+T,Qx\right) $. By coordinate transformation we obtain the following equation:
\begin{equation}
	\begin{cases}\label{1.2}
		\frac{\partial v}{\partial t} =  \bar{\Delta} v - le_{3}\times v - \left ( v\cdot \nabla  \right ) v - M\nabla \pi + f , \\
		\nabla \cdot v=0,
	\end{cases}
\end{equation}
where $ \bar{\Delta} = e^{-2\alpha}\frac{\partial^2}{\partial x_1^{2}} + e^{-2\alpha} \frac{\partial^2}{\partial x_2^{2}} + \frac{\partial^2}{\partial x_3^{2}} $, and $ M = \begin{pmatrix}
	e^{-2\alpha } & 0 & 0 \\
	0 & e^{-2\alpha } & 0 \\
	0 & 0 & 1
\end{pmatrix} $.\\

 After that we will study the operator semigroup generated by equation $ \eqref{1.2} $. In particular, when $ \alpha = 0 $, equation $ \eqref{1.2} $ is the original equation $ \eqref{1.1} $. In this paper, we will introduce the new concepts with spatio-temporal structure, such as spiral-like almost periodic solutions and spiral-like almost automorphic solutions. Hence, we generalize the time-periodic problem of the Navier-Stokes equations to spatio-temporal patterns. The primary objective of this paper is to establish the existence and uniqueness of rotating periodic, spiral-like almost periodic and spiral-like almost automorphic solutions of equation $ \eqref{1.1} $. Our result shows that if the external force $ f $ has a rotating spatio-temporal structure, then equation $ \eqref{1.1} $ also has a solution evolving in the same structure.

The rest of the article is organized as follows. In Section 2, we introduce some preliminaries about function spaces and $ L^{p}-L^{q} $ estimates of the corresponding operator semigroup of equation $ \eqref{1.2} $. In addition, the definitions of affine-periodic, spiral-like almost periodic and spiral-like almost automorphic solutions are introduced. In Section 3, we prove the existence and uniqueness of the rotating periodic, spiral-like almost periodic and spiral-like almost automorphic solutions of equation $ \eqref{1.1} $ when $ \alpha =0 $.

\section{Preliminaries}

Throughout this paper, we shall denote by $ C $ the constants which may change from line to line. We shall introduce some notations and function spaces. Let $ C_{0,\sigma }^{\infty }\left ( \mathbb{R}^{3}  \right )  $ denote the set of all real-valued $ C^{\infty } $ vector functions $ \phi = \left ( \phi ^{1}, \phi ^{2},\phi ^{3}\right )  $ with compact support in $ \mathbb{R}^{3} $ such that $ div\phi = 0 $. For $ 1<r<\infty $, $ L_{\sigma }^{r}\left ( \mathbb{R}^{3}  \right ) $ is defined as the closure of $ C_{0,\sigma }^{\infty }\left ( \mathbb{R}^{3}  \right ) $ with respect to the $ L^{r} $-norm $ \left \| \cdot  \right \| _{r} $. $ \left ( \cdot,\cdot   \right )  $ denotes the duality pairing between $ L_{\sigma }^{r}\left ( \mathbb{R}^{3} \right ) $ and $ L_{\sigma }^{r^{\prime}}\left ( \mathbb{R}^{3}  \right ) $, where $ 1/r+1/r^{\prime} = 1 $. $ L^{r}\left ( \mathbb{R}^{3}  \right ) $ and $ \dot{W}^{1,r}\left ( \mathbb{R}^{3}  \right ) $ stand for the usual vector-valued Lebesgue space and homogeneous Sobolev space over $ \mathbb{R}^{3} $, respectively. Let $ \mathscr{S}\left( \mathbb{R}^{3} \right ) $ be the Schwartz class of all rapidly decreasing functions, and let $ \mathscr{S}' \left( \mathbb{R}^{3} \right )$ be the space of all tempered distributions. We first recall the definition of the homogeneous Littlewood-Paley decompositions. Let  $ \varphi $ be a radial function in $ \mathscr{S} \left( \mathbb{R}^{3} \right ) $ satisfying the following properties:
\begin{displaymath}
	0\le \hat{\varphi}\left( \xi\right) \le 1 \quad \text{for all $ \xi \in \mathbb{R}^{3} $}, 		
\end{displaymath}
\begin{displaymath}
	supp\hat{\varphi } \subset \left \{ \xi \in \mathbb{R}^{3} \mid 2^{-1}\le \left | \xi  \right |\le 2 \right \},
\end{displaymath}
and
\begin{displaymath}
	\sum \limits _{j\in \mathbb{Z} }\hat{\varphi }_{j}\left ( \xi  \right ) = 1 \quad \text{for all $ \xi \in \mathbb{R}^{3} \setminus \left \{ 0 \right \}  $},
\end{displaymath}
where $ \varphi _{j}\left ( x \right ) := 2^{3j}\varphi \left ( 2^{j}x \right ) $. Then, for $ s\in \mathbb{R} $ and $ 1 \le p,q \le \infty $, the homogeneous Besov space $ \dot{B}_{p,q} ^{s}\left( \mathbb{R}^{3}\right) $ is defined as the set of all tempered distributions $ f \in \mathscr{S}' \left( \mathbb{R}^{3} \right ) $ such that the following semi-norm is finite:
\begin{displaymath}
	\left \| f \right \| _{\dot{B}_{p,q} ^{s}} := \left \| \left \{ 2^{sj}\left \| \varphi _{j}\ast f \right \|_{p}  \right \} _{j \in \mathbb{Z} } \right \| _{q}.
\end{displaymath}

In the following we consider semigroup generated by equation $ \eqref{1.2} $. Similar to the derivation in \cite{ref7}.
First we consider the linear equation
\begin{equation}\label{2.1}
	\begin{split}
		v_{t} - \bar{\Delta}v + le_{3}\times v + M\nabla \pi = 0,\quad x\in \mathbb{R}^{3},t>0,\\
		\nabla\cdot v = 0,\quad x\in \mathbb{R}^{3},t>0,\\
		v\left( 0,x\right) = v_{0}\left( x\right),\quad x\in \mathbb{R}^{3},
	\end{split} 
\end{equation}
and the corresponding resolvent equation in $ L^{p} $ spaces. More precisely, let $ \lambda \in \sum_{\phi} $ for some $ \phi \in \left[ 0,\pi/2 \right) $, where $ \sum_{\phi} =  \left \{ z\in \mathbb{C} \setminus \left \{ 0 \right \}  ,\left | \mathrm{arg} z \right |< \phi \right \} $ and let $ g \in L^{p}_{\sigma}\left( \mathbb{R}^{3}\right) $. Consider
 \begin{equation}\label{2.2}
 	\begin{split}
 		\lambda v - \bar{\Delta}v + le_{3}\times v + M\nabla \pi = g,\quad x\in \mathbb{R}^{3},t>0,\\
 		\nabla\cdot v = 0,\quad x\in \mathbb{R}^{3},t>0.
 	\end{split} 	
 \end{equation}
Taking the Fourier transform of $ \eqref{2.2} $, we have
\begin{equation}\label{2.3}
	\begin{split}
		\left( \lambda + e^{-2\alpha} \xi_{1}^{2} + e^{-2\alpha} \xi_{2}^{2} + \xi_{3}^{2} \right) \widehat{v_{1}} - l\widehat{v_{2}} + e^{-2\alpha}i\xi_{1}\widehat{\pi} = \widehat{g_{1}}\\
		\left( \lambda + e^{-2\alpha} \xi_{1}^{2} + e^{-2\alpha} \xi_{2}^{2} + \xi_{3}^{2} \right) \widehat{v_{2}} + l\widehat{v_{1}} + e^{-2\alpha}i\xi_{2}\widehat{\pi} = \widehat{g_{2}}\\
		\left( \lambda + e^{-2\alpha} \xi_{1}^{2} + e^{-2\alpha} \xi_{2}^{2} + \xi_{3}^{2} \right) \widehat{v_{3}}  + i\xi_{3}\widehat{\pi} = \widehat{g_{3}}.
	\end{split}
\end{equation}
Note that $ \nabla\cdot g = 0 $, $ \widehat{\nabla \cdot g} = i\xi_{1} \widehat{g_{1}} + i\xi_{2} \widehat{g_{2}} + i\xi_{3} \widehat{g_{3}} = 0 $. This yields
\begin{displaymath}
	-e^{-2\alpha} \xi_{1}^{2}\widehat{\pi} \left( \xi\right) - e^{-2\alpha} \xi_{2}^{2}\widehat{\pi} \left( \xi\right) - \xi_{3}^{2}\widehat{\pi} \left( \xi\right) + l\left( -i\xi_{1}\widehat{v_{2}}\left( \xi\right) + i\xi_{2}\widehat{v_{1}}\left( \xi\right) \right) = 0,
\end{displaymath}
and hence
\begin{equation}\label{2.4}
	\widehat{\pi}\left( \xi\right) = \frac{l}{\left | \bar{\xi} \right | ^{2}} \left[ i\xi_{2}\widehat{v_{1}}\left( \xi\right) - i\xi_{1}\widehat{v_{2}}\left( \xi\right) \right],
\end{equation}
where 
\begin{displaymath}
		\bar{\xi} = \left( e^{-\alpha}\xi_{1},e^{-\alpha}\xi_{2},\xi_{3} \right),
\end{displaymath} 
\begin{displaymath}
		\left | \bar{\xi} \right | ^{2} = e^{-2\alpha} \xi_{1}^{2} + e^{-2\alpha} \xi_{2}^{2} + \xi_{3}^{2}.
\end{displaymath} 
 From $ \eqref{2.3} $ and $ \eqref{2.4} $, we obtain 
\begin{displaymath}
	\begin{pmatrix}
	\lambda + \left | \bar{\xi} \right | ^{2} - e^{-2\alpha}l\frac{\xi_{1}\xi_{2}}{\left | \bar{\xi} \right | ^{2}} 	& -l + e^{-2\alpha}l\frac{\xi_{1}^{2}}{\left | \bar{\xi} \right | ^{2}}  & 0 \\
	l - e^{-2\alpha}l\frac{\xi_{2}^{2}}{\left | \bar{\xi} \right | ^{2}} 	& \lambda + \left | \bar{\xi} \right | ^{2} + e^{-2\alpha}l\frac{\xi_{1}\xi_{2}}{\left | \bar{\xi} \right | ^{2}}  & 0 \\
	-l\frac{\xi_{2}\xi_{3}}{\left | \bar{\xi} \right | ^{2}}	& l\frac{\xi_{1}\xi_{3}}{\left | \bar{\xi} \right | ^{2}} & \lambda + \left | \bar{\xi} \right | ^{2}
	\end{pmatrix}\begin{pmatrix}
	\widehat{v_{1}} \\
	\widehat{v_{2}} \\
	\widehat{v_{3}}
\end{pmatrix} = \begin{pmatrix}
\widehat{g_{1}} \\
\widehat{g_{2}} \\
\widehat{g_{3}}
\end{pmatrix}.
\end{displaymath}
Setting $ \omega = \sqrt{\lambda + \left | \bar{\xi} \right | ^{2} } $, we have
\begin{displaymath}
	\begin{pmatrix}
	\widehat{v_{1}}	\\
	\widehat{v_{2}}
	\end{pmatrix} = \frac{1}{\mathrm {det}} \begin{pmatrix}
	\omega^{2} + e^{-2\alpha}l\frac{\xi_{1}\xi_{2}}{\left | \bar{\xi} \right | ^{2}} & l\left( 1 - e^{-2\alpha}\frac{\xi_{1}^{2}}{\left | \bar{\xi} \right | ^{2}} \right) \\
	-l\left( 1 - e^{-2\alpha}\frac{\xi_{2}^{2}}{\left | \bar{\xi} \right | ^{2}}\right)  & \omega^{2} - e^{-2\alpha}l\frac{\xi_{1}\xi_{2}}{\left | \bar{\xi} \right | ^{2}} 
\end{pmatrix}   \begin{pmatrix}
\widehat{g_{1}}	\\
\widehat{g_{2}}
\end{pmatrix},
\end{displaymath}
where
\begin{displaymath}
	\mathrm{det} = \omega^{4} + l^{2}\frac{\xi_{3}^{2}}{\left | \bar{\xi} \right | ^{2}}.
\end{displaymath}
 Hence, the solution $ \widehat{v} $ to $ \eqref{2.3} $ is given by
\begin{displaymath}
	\widehat{v} = \frac{\omega^{2}}{\mathrm{det}}I\widehat{g} + \frac{l}{\mathrm{det}}\frac{\xi_{3}}{\left | \bar{\xi} \right |}R\widehat{g},
\end{displaymath}
where $ I $ is the identity matrix and
\begin{displaymath}
	R = R\left ( \xi \right ) = \frac{1}{\left | \bar{\xi} \right |}\begin{pmatrix}
		0 & \xi_{3} & -e^{-2\alpha}\xi_{2} \\
		-\xi_{3} & 0 & e^{-2\alpha}\xi_{1} \\
		\xi_{2} & -\xi_{1} & 0
	\end{pmatrix}.
\end{displaymath}
Next, we give an explicit expression for the solution $ \widehat{v} $ and $ \widehat{\pi} $ of the time dependent linear problem $ \eqref{2.1} $ in Fourier variables, i.e. of the problem
\begin{equation}\label{2.5}
	\begin{split}
			\widehat{v_{t}}\left( \xi\right) + \left | \bar{\xi} \right |^{2}I\widehat{v}\left( \xi\right) + \widehat{le_{3}\times v\left( \xi\right) } + Mi\xi \widehat{\pi}\left( \xi\right) = 0, \quad \xi\in \mathbb{R}^{3},t>0, \\
			i\xi \cdot \widehat{v}\left( \xi\right) = 0,\quad x\in \mathbb{R}^{3},t>0, \\
			\widehat{v}\left( 0,\xi\right) = \widehat{v_{0}}\left( \xi\right) \quad x\in \mathbb{R}^{3}.
	\end{split}
\end{equation}
Observe that
\begin{displaymath}
	\int_{0}^{\infty } e^{-\lambda t}\frac{1}{2} \left [ e^{il\frac{ \xi_{3} }{\left | \bar{\xi}  \right |}t } + e^{-il\frac{ \xi_{3} }{\left | \bar{\xi}  \right |}t } \right ] e^{-\left| \bar{\xi}\right|^{2}t}dt = \frac{1}{2} \left[ \frac{1}{\omega^{2}+il\frac{ \xi_{3} }{\left| \bar{\xi}\right| }} + \frac{1}{\omega^{2}-il\frac{ \xi_{3} }{\left| \bar{\xi}\right| }} \right] = \frac{\omega^{2}}{\mathrm{det}}
\end{displaymath}
and that
\begin{displaymath}
	\frac{l}{\mathrm{det}}\frac{\xi_{3}}{\left| \bar{\xi}\right| } = -\frac{1}{2i}\left[ \frac{1}{\omega^{2}+il\frac{ \xi_{3} }{\left| \bar{\xi}\right| }} - \frac{1}{\omega^{2}-il\frac{ \xi_{3} }{\left| \bar{\xi}\right| }} \right].
\end{displaymath}
Hence, we obtain the following proposition.

\begin{proposition}\label{proposition 2.1}
	There exists a unique solution $ \left( \widehat{v}, \widehat{\pi}\right) $ of equation $ \eqref{2.5} $, where $ \widehat{v} $ is given by
	\begin{displaymath}
		\widehat{v}\left( t,\xi\right) = \cos\left( l\frac{\xi_{3}}{\left| \bar{\xi}\right|}t\right) e^{-\left| \bar{\xi}\right|^{2}t}I\widehat{v_{0}}\left( \xi\right) + \sin\left( l\frac{\xi_{3}}{\left| \bar{\xi}\right|}t \right) e^{-\left| \bar{\xi}\right|^{2}t}R\left( \xi\right) \widehat{v_{0}}\left( \xi\right),\quad t\ge 0,\xi \in \mathbb{R}^{3}.\\
	\end{displaymath}
\end{proposition}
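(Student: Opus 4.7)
The plan is to exploit the fact that the two integral identities displayed immediately before the proposition identify the scalar multipliers $\omega^{2}/\det$ and $(l/\det)(\xi_3/|\bar\xi|)$ appearing in the resolvent formula as the Laplace transforms in $t$ of $\cos(l\xi_3 t/|\bar\xi|)\,e^{-|\bar\xi|^2 t}$ and $\sin(l\xi_3 t/|\bar\xi|)\,e^{-|\bar\xi|^2 t}$, respectively. First I would Laplace-transform (2.5) in $t$: for each fixed $\xi$, the boundary term produced by $\partial_t\widehat{v}$ converts (2.5) into the algebraic system (2.3) with source $\widehat{v_0}(\xi)$, whose explicit solution was already derived just above the proposition. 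Inverting the Laplace transform termwise against the two identities then yields the asserted formula.

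To make the argument self-contained I would alternatively verify the ansatz directly. The initial condition is immediate from $\cos 0 = 1$, $\sin 0 = 0$. For the divergence-free condition I would use the pointwise identity $\xi\cdot R(\xi)w = 0$ valid for every $w\in\mathbb{R}^3$, which holds by inspection of the explicit form of $R$ (the six entries of $\xi^{\top}Rw$ cancel pairwise). Combined with $\xi\cdot\widehat{v_0}(\xi)=0$, this gives $i\xi\cdot\widehat{v}(t,\xi)=0$ for every $t\ge 0$, and the pressure $\widehat{\pi}$ is then forced by (2.4).

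For the PDE itself, set $\mu:=l\xi_3/|\bar\xi|$ and $\beta:=|\bar\xi|^2$ and compute
\[\partial_t\widehat{v} + \beta\widehat{v} = \mu\,e^{-\beta t}\bigl[-\sin(\mu t)\,\widehat{v_0}(\xi) + \cos(\mu t)\,R(\xi)\widehat{v_0}(\xi)\bigr].\]
The remaining task is to show that $-\widehat{le_3\times v} - Mi\xi\widehat{\pi}$ reproduces the right-hand side. Matching the $\cos(\mu t)$ and $\sin(\mu t)$ coefficients separately, and using (2.4) to express $\widehat{\pi}$ in terms of the ansatz, reduces this to two matrix identities on the divergence-free subspace $\{w\in\mathbb{R}^3:\xi\cdot w=0\}$. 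The first is a componentwise verification using $\xi_1 w_1+\xi_2 w_2+\xi_3 w_3=0$; the second follows from the first together with the identity $R(\xi)^2 w = -w$ on the same subspace, which is itself a short calculation based on $|\bar\xi|^2 = e^{-2\alpha}\xi_1^2 + e^{-2\alpha}\xi_2^2 + \xi_3^2$. For uniqueness, eliminating $\widehat{\pi}$ via (2.4) turns (2.5), for each fixed $\xi$, into a linear constant-coefficient ODE in $\widehat{v}(\cdot,\xi)$ with prescribed initial value, so Picard--Lindel\"of applies; uniqueness of $\widehat{v}$ then forces uniqueness of $\widehat{\pi}$.

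The main obstacle I anticipate is the algebraic bookkeeping on the divergence-free subspace: the matrix identities required to match the Coriolis-plus-pressure term with the rotational part of $\partial_t\widehat{v}$ do \emph{not} hold on all of $\mathbb{R}^3$, only after projection onto $\{\xi\cdot w=0\}$, so the divergence-free constraint must be invoked at essentially every step of the verification.
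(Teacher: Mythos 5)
Your primary argument—Laplace-transforming \eqref{2.5} in $t$ to recover the resolvent system \eqref{2.3} with source $\widehat{v_0}$, and then inverting termwise using the two displayed identities that exhibit $\omega^{2}/\mathrm{det}$ and $(l/\mathrm{det})(\xi_3/|\bar\xi|)$ as Laplace transforms of $\cos(l\xi_3 t/|\bar\xi|)e^{-|\bar\xi|^2 t}$ and $\sin(l\xi_3 t/|\bar\xi|)e^{-|\bar\xi|^2 t}$—is exactly the paper's (largely implicit) proof, and it is correct. Your supplementary direct verification and the Picard--Lindel\"of uniqueness argument are sound additions (in particular $\xi^{\top}R(\xi)=0$ identically and $R(\xi)^2w=-w$ on $\{\xi\cdot w=0\}$ both check out), but they go beyond what the paper records.
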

From Proposition 2.1 and Plancherel’s theorem, we obtain the semigroup generated by equation $ \eqref{1.2} $, which is given explicitly by
\begin{displaymath}
	\begin{split}
		T\left( t\right)g := &\mathcal{F}^{-1} \left[  \cos\left( l\frac{\xi_{3}}{\left| \bar{\xi}\right|}t\right) e^{-\left| \bar{\xi}\right|^{2}t}I\widehat{g}\left( \xi\right) + \sin\left( l\frac{\xi_{3}}{\left| \bar{\xi}\right|}t \right) e^{-\left| \bar{\xi}\right|^{2}t}R\left( \xi\right) \widehat{g}\left( \xi\right)\right],\\
		&t \ge 0,g \in L_{\sigma}^{p}\left( \mathbb{R} ^{3}\right).
	\end{split}
\end{displaymath}

\begin{definition}
	A function $ v\left( t,x\right) $ is called a mild solution of $ \eqref{1.2} $, if it satisfies the following integral form:
\begin{equation}\label{2.6}
	v\left ( t \right ) :=-\int_{-\infty}^{t} T\left ( t-\tau   \right ) \mathbb{P} \left [ \left ( v\left ( \tau  \right ) \cdot \nabla \right )v\left ( \tau \right ) \right ]d\tau + \int_{-\infty}^{t} T\left ( t-\tau   \right ) \mathbb{P} f\left( \tau\right) d\tau,
\end{equation}
where $ \mathbb{P} = \left ( \delta_{ij} + R_{i}R_{j} \right ) _{1 \le i,j \le 3} $ denotes the Helmholtz projection onto the divergence-free vector field.
\end{definition}

\begin{definition}
	(i) System $ \eqref{1.1} $ is a $ \left( Q,T\right) $-affine-periodic system, that is, $ f\left( t+T,x \right) = Qf\left( t,Q^{-1}x \right),\pi \left( t+T,x\right) = \pi \left( t,Q^{-1}x\right) $. Specifically, when $ Q $ is an orthogonal matrix, system $ \eqref{1.1} $ is called a rotating periodic system.\\
	(ii) $ f $ is spiral-like almost periodic, that is, for every $ \varepsilon >0 $ there exists a $ T\left( \varepsilon\right) >0 $ such that for any interval $ I $ of length $ T $ there is an $ s $ in $ I $ such that $ \left \| f\left( s+\cdot ,x\right) -Qf\left( \cdot,Q^{-1}x\right) \right \| _{\mathcal{L}} \\<\varepsilon $;\\
	(iii) $ f $ is spiral-like almost automorphic, that is, for each sequence $ \left\lbrace t'_{n}\right\rbrace  $ there exists a subsequence $ \left\lbrace t_{n}\right\rbrace \to \infty $ as $ n \to \infty $ and $ g\in BC\left ( \mathbb{R};{\dot{B}_{p,2} ^{-s} } \left ( \mathbb{R}^{3} \right ) \right )\cap BC\left ( \mathbb{R};{L^{\kappa}} \left ( \mathbb{R}^{3} \right ) \right ) $ such that $ \left \| f\left( \cdot+t_{n} ,x\right) -Qg\left( \cdot, Q^{-1}x\right) \right \| _{\mathcal{L}} \to 0 $ and $ \left \| g\left( \cdot-t_{n} ,x\right) -Qf\left( \cdot,Q^{-1}x\right) \right \| _{\mathcal{L}} \to 0 $ as $ n \to \infty $.\\
	Here, $ \left \| f \right \|_{\mathcal{L} } :=\sup\limits_{\mathbb{R} } \left \| f\left ( \cdot  \right )  \right \|_{\dot{B}_{p,2} ^{-s}} + {\sup\limits_{\mathbb{R} } \left \| f\left ( \cdot  \right )  \right \|_{\kappa}} $, where $ p,s,\kappa >0 $.
\end{definition}

\begin{definition}
	Assume that $ u\left( t\right)  $ is a mild solution of $ \eqref{2.6} $.\\
	(i) $ u $ is $ \left( Q,T\right) $-affine-periodic, that is, $ u\left( t+T,x \right) = Qu\left( t,Q^{-1}x \right) $. Specifically, when $ Q $ is an orthogonal matrix, $ u $ is called the rotating periodic solution.\\
	(ii) $ u $ is a spiral-like almost periodic solution, that is, for every $ \varepsilon >0 $ there exists a $ T\left( \varepsilon\right) >0 $ such that for any interval $ I $ of length $ T $ there is an $ s $ in $ I $ such that $ \left \| u\left( s+\cdot ,x\right) -Qu \right.$ $\left. \left( \cdot,Q^{-1}x\right) \right \| _{X^{r,q}} <\varepsilon $;\\
	(iii) $ u $ is spiral-like almost automorphic, that is, for each sequence $ \left\lbrace t'_{n}\right\rbrace  $ there exists a subsequence $ \left\lbrace t_{n}\right\rbrace \to \infty $ as $ n \to \infty $ and $ v\in BC\left( \mathbb{R}; L_{\sigma }^{r} \left ( \mathbb{R}^{3} \right ) \right) \cap BC\left( \mathbb{R}; \dot{W}^{1,q}\left ( \mathbb{R}^{3} \right ) \right) $ such that $ \left \| u\left( \cdot+t_{n} ,x\right) - Qv \left( \cdot,Q^{-1}x\right) \right \| _{X^{r,q}}$  $ \to 0 $ and $ \left \| v\left( \cdot-t_{n} ,x\right) -Qu\left( \cdot,Q^{-1}x\right) \right \| _{X^{r,q}} \to 0 $ as $ n \to \infty $.\\
	Here, $ X^{r,q} := BC\left ( \mathbb{R};L_{\sigma }^{r} \left ( \mathbb{R}^{3} \right ) \right )\cap BC\left ( \mathbb{R}; \dot{W}^{1,q}\left ( \mathbb{R}^{3} \right ) \right ), \left \| v \right \|_{X^{r,q} } := \sup\limits_{\mathbb{R} }\left \| v\left(\cdot \right)  \right \|_{r} + \sup\limits_{\mathbb{R} }\left \| \nabla v\left( \cdot\right)  \right \|_{q} $, where $ r,q >0 $.
\end{definition}

Similar to Proposition 2.4 in \cite{ref7}. Below we introduce the $ L^{p}-L^{q} $ estimates for the semigroup $ T\left( t\right) $, which play an important role in the subsequent proof of Lemma $ \ref{lemma 3.1} $.
\begin{lemma}\label{lemma 2.1}
	Let $ 1 \le p \le 2 \le q \le \infty $. Then for any $ m \in \mathbb{N}_{0} $ and $ s\ge 0 $ there exists a constant $ C > 0 $
	such that
	\begin{displaymath}
		\begin{split}
			\left \| \nabla^{m} T\left ( t \right ) g  \right \| _{q} \le C t^{-\frac{m}{2}-\frac{3}{2}\left ( \frac{1}{p} -\frac{1}{q}  \right ) }\left \| g \right \|_{p},\quad t > 0,g \in L_{\sigma}^{p}\left( \mathbb{R} ^{3}\right),\\
			\left \| \left( -\Delta  \right) ^{s} T\left ( t \right ) g  \right \| _{2} \le C t^{-s-\frac{3}{2}\left ( \frac{1}{p} -\frac{1}{2}  \right ) }\left \| g \right \|_{p}, \quad t > 0,g \in L_{\sigma}^{p}\left( \mathbb{R} ^{3}\right),
		\end{split}
	\end{displaymath}
	and the same assertions hold for $ T^{\ast}\left( t\right) $, which denotes the adjoint operator of $ T\left( t\right) $.
\end{lemma}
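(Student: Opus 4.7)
The plan is to reduce both estimates to a pointwise bound on the Fourier symbol of $T(t)$ combined with Hausdorff--Young, H\"older's inequality, and explicit Gaussian integrals. Because the symbol of $T(t)$ is given explicitly by Proposition 2.1, the argument becomes a direct calculation rather than an application of any delicate multiplier theory: the gradient and fractional Laplacian merely contribute extra symbol factors which get absorbed into the heat-kernel decay.

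First I would observe that $|\cos(\cdot)|,|\sin(\cdot)|\le 1$, and that each entry of $R(\xi)$ is uniformly bounded on $\mathbb{R}^3\setminus\{0\}$, since $|\xi_i|\le e^{\alpha}|\bar{\xi}|$ for $i=1,2$ and $|\xi_3|\le|\bar{\xi}|$. Together with the explicit form of $\widehat{T(t)g}$ from Proposition 2.1, this yields the pointwise symbol bound
\begin{displaymath}
\left|\widehat{T(t)g}(\xi)\right| \le C_\alpha\, e^{-|\bar{\xi}|^2 t}\,\left|\widehat{g}(\xi)\right|, \qquad \xi\in\mathbb{R}^3,\ t>0.
\end{displaymath}
Since $|\bar{\xi}|^2$ is comparable to $|\xi|^2$ up to constants depending on $\alpha$, the elementary inequality $x^k e^{-x}\le C_k e^{-x/2}$ applied with $x=|\bar{\xi}|^2 t$ gives, for all $m\in\mathbb{N}_0$ and $s\ge 0$,
\begin{displaymath}
|\xi|^m\,e^{-|\bar{\xi}|^2 t}\le C\,t^{-m/2}\,e^{-c|\bar{\xi}|^2 t}, \qquad |\xi|^{2s}\,e^{-|\bar{\xi}|^2 t}\le C\,t^{-s}\,e^{-c|\bar{\xi}|^2 t},
\end{displaymath}
for some $c>0$ depending on $\alpha$.

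For the first estimate, since $2\le q\le\infty$ and $q'\in[1,2]$, Hausdorff--Young together with the bounds above gives $\|\nabla^m T(t)g\|_q\le C\,\||\xi|^m\widehat{T(t)g}\|_{q'}\le C\,t^{-m/2}\,\|e^{-c|\bar{\xi}|^2 t}\widehat{g}\|_{q'}$. H\"older's inequality with exponents $1/q'=1/r+1/p'$, i.e.\ $1/r=1/p-1/q$, then yields
\begin{displaymath}
\|\nabla^m T(t)g\|_q\le C\,t^{-m/2}\,\|e^{-c|\bar{\xi}|^2 t}\|_r\,\|\widehat{g}\|_{p'}.
\end{displaymath}
A direct Gaussian integration gives $\|e^{-c|\bar{\xi}|^2 t}\|_r\le C\,t^{-3/(2r)}=C\,t^{-\frac{3}{2}(\frac{1}{p}-\frac{1}{q})}$, and Hausdorff--Young applied to $g\in L^p$ with $1\le p\le 2$ gives $\|\widehat{g}\|_{p'}\le C\|g\|_p$; combining these produces the stated bound. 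The second estimate follows by the same argument with $q=2$ (Plancherel replacing Hausdorff--Young) and with the $|\xi|^{2s}$-bound replacing the $|\xi|^m$-bound. The endpoint $p=q=2$, where formally $r=\infty$, is handled directly by Plancherel.

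Finally, inspection of the symbol in Proposition 2.1 shows that the symbol of $T^*(t)$ is the conjugate transpose of the symbol of $T(t)$, so it satisfies exactly the same pointwise majorization and the argument applies verbatim. The only real obstacle here is making the pointwise dominance entry-by-entry: one must confirm that the matrix multiplier $\cos(\cdot)I+\sin(\cdot)R(\xi)$ is majorized by the scalar Gaussian $C_\alpha e^{-|\bar{\xi}|^2 t}$, so that no Fourier multiplier theorem (Mikhlin, Marcinkiewicz, etc.) is needed. Once this reduction is in hand, the $L^p$--$L^q$ estimates follow from the classical heat-kernel calculation.
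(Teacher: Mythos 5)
Your proof is correct, but it takes a genuinely different route from the paper's. The paper factors the semigroup as $T(t)g=e^{\frac{t}{2}\bar{\Delta}}\left[\cos(lR_{3}t)I+\sin(lR_{3}t)R\right]e^{\frac{t}{2}\bar{\Delta}}g$ and reduces the claim to the classical $L^{p}$--$L^{q}$ smoothing of the heat semigroup by asserting that the zero-homogeneous multiplier $\cos(lR_{3}t)I+\sin(lR_{3}t)R$ is bounded on $L^{q}$ uniformly in $t$; that assertion is the delicate point of that route (it is immediate only on $L^{2}$ by Plancherel, and the restriction $1\le p\le 2\le q\le\infty$ in the statement is exactly what permits the intended factorization $L^{p}\to L^{2}\to L^{q}$ with the oscillatory multiplier absorbed on $L^{2}$). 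You instead work entirely on the Fourier side: the pointwise majorization $|\widehat{T(t)g}(\xi)|\le C_{\alpha}e^{-|\bar{\xi}|^{2}t}|\widehat{g}(\xi)|$, two applications of Hausdorff--Young (which is precisely where you use $p\le 2$ and $q\ge 2$), H\"older with $1/r=1/p-1/q$, and the Gaussian integral $\|e^{-c|\bar{\xi}|^{2}t}\|_{r}\le Ct^{-3/(2r)}$. Your exponent bookkeeping checks out, the entries of $R(\xi)$ are indeed uniformly bounded since $|\xi_{3}|\le|\bar{\xi}|$ and $e^{-\alpha}|\xi_{i}|\le|\bar{\xi}|$ for $i=1,2$, and the adjoint is covered because the transposed symbol obeys the same pointwise bound. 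What your argument buys is that no multiplier theorem and no $L^{q}$ operator bound for $\cos(lR_{3}t)I+\sin(lR_{3}t)R$ with $q\ne 2$ is ever needed --- everything is a scalar pointwise estimate --- which in fact patches the one soft spot in the paper's write-up; the cost, shared with the paper's route through $L^{2}$, is that the method is intrinsically tied to the window $p\le 2\le q$ and would not extend to general $1\le p\le q\le\infty$.
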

\begin{proof}
	Note the semigroup $ T $ can be rewritten as
	\begin{displaymath}
		T\left( t \right) g = e^{\frac{t}{2}\bar{\Delta }}\left [ \cos\left ( lR_{3}t \right )I + \sin \left ( lR_{3}t \right )R \right ]e^{\frac{t}{2}\bar{\Delta }}g,
	\end{displaymath}
	where $ R_{3} $ is defined by $ \widehat{R_{3}g}\left( \xi \right) := \frac{\xi_{3}}{\left| \bar{\xi}\right| }\widehat{g}\left( \xi\right) $. Similar to the classical $ L^{p}-L^{q} $ smooth properties of $ e^{t\Delta} $, we have
	\begin{displaymath}
		\begin{split}
			\left \| \nabla^{m} T\left ( t \right ) g  \right \| _{q} 
			&\le \left \| \left [ \cos\left ( lR_{3}t \right )I + \sin \left ( lR_{3}t \right )R \right ]\nabla^{m}e^{\frac{t}{2}\bar{\Delta }}e^{\frac{t}{2}\bar{\Delta }}g  \right \|_{q} \\
			&\le C\left \| \nabla^{m}e^{\frac{t}{2}\bar{\Delta }}e^{\frac{t}{2}\bar{\Delta }}g  \right \|_{q} \le C t^{-\frac{m}{2}-\frac{3}{2}\left ( \frac{1}{p} -\frac{1}{q}  \right ) }\left \| g \right \|_{p},
		\end{split}
	\end{displaymath}
	\begin{displaymath}
		\begin{split}
			\left \| \left( -\Delta  \right) ^{s} T\left ( t \right ) g  \right \| _{2} 
			&\le \left \| \left [ \cos\left ( lR_{3}t \right )I + \sin \left ( lR_{3}t \right )R \right ]\left( -\Delta  \right) ^{s}e^{\frac{t}{2}\bar{\Delta }}e^{\frac{t}{2}\bar{\Delta }}g  \right \| _{2} \\
			&\le \left \|\left( -\Delta  \right) ^{s}e^{\frac{t}{2}\bar{\Delta }}e^{\frac{t}{2}\bar{\Delta }}g  \right \| _{2} \le C t^{-s-\frac{3}{2}\left ( \frac{1}{p} -\frac{1}{2}  \right ) }\left \| g \right \|_{p}.
		\end{split}
	\end{displaymath}
\end{proof}

Similar to the approach of Lemma 2.2 in \cite{ref14}, We have the following lemma.
\begin{lemma}\label{lemma 2.2}
	Let $ -\infty <s_{0} \le s_{1} <\infty $.  Then there exists a positive constant $ C=C\left( s_{0},s_{1}\right) $ such that
	\begin{equation}\label{2.7}
		\left \| e^{t\bar{\Delta}}f \right \| _{\dot{B}_{p,q} ^{s_{1}}} \le Ct^{-\frac{1}{2}\left( s_{1}-s_{0}\right) }\left \| f \right \| _{\dot{B}_{p,q} ^{s_{0}}}
	\end{equation}
	for all $ t>0,1 \le p,q \le \infty $ and $ f\in {\dot{B}_{p,q} ^{s_{0}} } \left ( \mathbb{R}^{3} \right ) $.
\end{lemma}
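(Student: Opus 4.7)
The plan is to combine the Littlewood-Paley characterization of the Besov norm with a frequency-localized smoothing estimate for $e^{t\bar{\Delta}}$. The starting observation is that the symbol $|\bar{\xi}|^2 = e^{-2\alpha}(\xi_1^2+\xi_2^2)+\xi_3^2$ satisfies $|\bar{\xi}|^2 \ge c_0|\xi|^2$ with $c_0 := \min(e^{-2\alpha},1) > 0$, so on the dyadic annulus $\{2^{j-1} \le |\xi| \le 2^{j+1}\}$ one has $|\bar{\xi}|^2 \gtrsim 2^{2j}$, and $e^{t\bar{\Delta}}$ enjoys the same kind of frequency-localized Gaussian decay as the ordinary heat semigroup, with constants depending only on $\alpha$.

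First, I would pick an auxiliary $\tilde{\varphi}\in\mathscr{S}(\mathbb{R}^{3})$ with $\hat{\tilde{\varphi}}\equiv 1$ on $\mathrm{supp}\,\hat{\varphi}$ and supported in a slightly larger annulus, and set $\tilde{\varphi}_{j}(x):=2^{3j}\tilde{\varphi}(2^{j}x)$, so that $\varphi_{j}\ast f=\tilde{\varphi}_{j}\ast\varphi_{j}\ast f$ for every $j\in\mathbb{Z}$. Since convolution commutes with $e^{t\bar{\Delta}}$, this gives $\varphi_{j}\ast e^{t\bar{\Delta}}f=(e^{t\bar{\Delta}}\tilde{\varphi}_{j})\ast(\varphi_{j}\ast f)$, and Young's convolution inequality reduces matters to the localized kernel bound
\[
\|e^{t\bar{\Delta}}\tilde{\varphi}_{j}\|_{L^{1}(\mathbb{R}^{3})}\le C\,e^{-c\,t\,2^{2j}},
\]
for positive constants $c,C$ depending only on $\alpha$. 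To prove this, one substitutes $\eta=2^{-j}\xi$ in the Fourier representation of $e^{t\bar{\Delta}}\tilde{\varphi}_{j}$ and uses the scaling $|\overline{2^{j}\eta}|^{2}=2^{2j}|\bar{\eta}|^{2}$ to obtain $e^{t\bar{\Delta}}\tilde{\varphi}_{j}(x)=2^{3j}H(2^{j}x,t\,2^{2j})$ where $H(y,\tau)$ is the inverse Fourier transform of $e^{-\tau|\bar{\eta}|^{2}}\hat{\tilde{\varphi}}(\eta)$. Since $|\bar{\eta}|^{2}\ge 2c>0$ on $\mathrm{supp}\,\hat{\tilde{\varphi}}$, one factors out $e^{-c\tau}$; the remaining symbol $e^{-\tau(|\bar{\eta}|^{2}-c)}\hat{\tilde{\varphi}}(\eta)$ is smooth, compactly supported, and has Schwartz seminorms bounded uniformly in $\tau\ge 0$ (because $\tau^{k}e^{-c\tau}$ is bounded), so its inverse Fourier transform is uniformly in $L^{1}$.

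Combining these ingredients gives $\|\varphi_{j}\ast e^{t\bar{\Delta}}f\|_{p}\le C\,e^{-c\,t\,2^{2j}}\|\varphi_{j}\ast f\|_{p}$, and therefore, by the definition of the Besov norm,
\[
\|e^{t\bar{\Delta}}f\|_{\dot{B}_{p,q}^{s_{1}}}\le C\sup_{j\in\mathbb{Z}}\bigl(2^{(s_{1}-s_{0})j}e^{-c\,t\,2^{2j}}\bigr)\|f\|_{\dot{B}_{p,q}^{s_{0}}}.
\]
Setting $u=\sqrt{t}\,2^{j}$, the supremum is dominated by $t^{-(s_{1}-s_{0})/2}\sup_{u>0}u^{s_{1}-s_{0}}e^{-cu^{2}}\le C(s_{0},s_{1})\,t^{-(s_{1}-s_{0})/2}$, which is finite because $s_{1}\ge s_{0}$. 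This yields the claimed bound $\eqref{2.7}$. The only nontrivial step is the localized kernel estimate; the rest is routine Littlewood-Paley bookkeeping, and everything transfers from the classical case because $\bar{\Delta}$ reduces to the ordinary Laplacian under the linear change of variables $(x_{1},x_{2},x_{3})\mapsto(e^{\alpha}x_{1},e^{\alpha}x_{2},x_{3})$, affecting only constants in $\alpha$.
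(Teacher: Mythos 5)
Your proof is correct, and its skeleton coincides with the paper's: both reduce matters to a single dyadic block via the reproducing identity $\varphi_{j}\ast e^{t\bar{\Delta}}f=\tilde{\varphi}_{j}\ast e^{t\bar{\Delta}}(\varphi_{j}\ast f)$ and then apply Young's inequality. Where you diverge is in the key kernel estimate. The paper writes $e^{t\bar{\Delta}}f=G(t,\cdot)\ast f$ with the explicit anisotropic Gaussian $G$ and splits the block kernel as $(-\Delta)^{-\beta/2}\tilde{\varphi}_{j}\ast(-\Delta)^{\beta/2}G$ with $\beta=s_{1}-s_{0}$, using the two separate $L^{1}$ scalings $\left\|(-\Delta)^{-\beta/2}\tilde{\varphi}_{j}\right\|_{L^{1}}=C2^{-\beta j}$ and $\left\|(-\Delta)^{\beta/2}G\right\|_{L^{1}}\le Ct^{-\beta/2}$, so the $j$-dependence and the $t$-dependence never interact. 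You instead prove the single frequency-localized bound $\left\|e^{t\bar{\Delta}}\tilde{\varphi}_{j}\right\|_{L^{1}}\le Ce^{-ct2^{2j}}$ by rescaling to the unit annulus and controlling Schwartz seminorms uniformly in $\tau=t2^{2j}$, and then optimize $2^{\beta j}e^{-ct2^{2j}}$ over $j$. Your route buys a strictly stronger intermediate estimate (exponential decay in $t2^{2j}$, which would also survive summation over $j$ rather than just a supremum) and is more self-contained: the paper's two $L^{1}$ bounds are only asserted by appeal to Lemma 2.1, which does not literally cover them, whereas you supply the uniformity argument in full. The paper's fractional-Laplacian splitting is slightly shorter on the page but proves exactly what is needed and nothing more. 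Both arguments use that $|\bar{\xi}|^{2}$ is comparable to $|\xi|^{2}$ with constants depending only on $\alpha$, so nothing is lost relative to the classical heat semigroup.
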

\begin{proof}
	For $ 1 \le p,q\le \infty $,
	\begin{displaymath}
		\left \| f \right \| _{\dot{B}_{p,q} ^{s}} := \left \| \left \{ 2^{sj}\left \| \varphi _{j}\ast f \right \|_{p}  \right \} _{j \in \mathbb{Z} } \right \| _{q} = \left ( \sum_{j=-\infty}^{\infty} \left ( 2^{sj}\left \| \varphi _{j}\ast f \right \|_{L^{p}} \right ) ^{q} \right )^{\frac{1}{q}}.
	\end{displaymath}
	Let 
	\begin{displaymath}
		G\left( t,x\right) = \frac{e^{2\alpha}}{\left( 4\pi t \right)^{\frac{3}{2}}}e^{-\frac{e^{2\alpha}x_{1}^{2}+e^{2\alpha}x_{2}^{2}+x_{3}^{2}}{4t}}.
	\end{displaymath}  From the Fourier transform and the inverse transform we have
	\begin{displaymath}
		e^{t\bar{\Delta}}f = G\left( t,x\right) \ast f.
	\end{displaymath}
	To prove $ \eqref{2.7} $, it suffices to show that 
	\begin{equation}\label{2.8}
		2^{s_{1}j}\left \| \varphi_{j} \ast G \ast f \right \| _{L^{p}}\le Ct^{-\frac{1}{2}\left( s_{1}-s_{0}\right)}2^{s_{0}j}\left \| \varphi_{j} \ast f \right \|_{L^{p}}
	\end{equation}
	with a constant $ C=C\left( s_{0},s_{1}\right) $ independent of $ t>0,j\in \mathbb{Z} ,1\le p \le \infty $. Since $ \mathrm{supp}\hat{\varphi } \subset \left \{ \xi \in \mathbb{R}^{3} \mid 2^{-1}\le \left | \xi  \right |\le 2 \right \} $, $ \mathrm{supp}\hat{\varphi_{j}} \subset \left \{ \xi \in \mathbb{R}^{3} \mid 2^{j-1}\le \left | \xi  \right |\le 2^{j+1} \right \} $, we have
	\begin{displaymath}
		\varphi _{j}\ast G \ast f = \tilde{\varphi_{j}}\ast G \ast \varphi_{j} \ast f 
	\end{displaymath}
	for all $ j\in \mathbb{Z} $, where $ \tilde{\varphi_{j}} \equiv \varphi _{j-1}+\varphi _{j}+\varphi _{j+1} $, and hence by the Young inequality, there holds
	\begin{displaymath}
		\begin{split}
			2^{s_{1}j}\left \| \varphi_{j} \ast G \ast f \right \| _{L^{p}} &= 2^{\beta j}2^{s_{0}j}\left \| \left( -\Delta\right) ^{-\alpha/2} \tilde{\varphi_{j}} \ast \left( -\Delta\right) ^{\alpha/2} G \ast \varphi_{j} \ast f \right \| _{L^{p}} \\
			&\le 2^{\beta j}\left \| \left( -\Delta\right) ^{-\alpha/2} \tilde{\varphi_{j}} \right \| _{L^{1}}\left \| \left( -\Delta\right) ^{\alpha/2} G \right \| _{L^{1}}\left( 2^{s_{0}j}\left \| \varphi_{j} \ast f \right \| _{L^{p}}\right),
		\end{split}
	\end{displaymath}
	where $ \beta \equiv s_{1}-s_{0} $. From Lemma $ \ref{lemma 2.1} $ it is easy to see that
	\begin{displaymath}
		\left \| \left ( -\Delta  \right ) ^{-\alpha /2}\tilde {\varphi} _{j}\right \| _{L^{1}} = C2^{-\alpha j},\left \| \left ( -\Delta  \right ) ^{\alpha /2}G\right \| _{L^{1}} \le Ct^{-\alpha/2}
	\end{displaymath}
	for all $ t>0 $ and all $ j \in \mathbb{Z} $ with $ C = C\left( s_{0},s_{1}\right) $. From this and the above estimate, we obtain $ \eqref{2.8} $.
\end{proof}

In the next section, we consider the existence and uniqueness of the rotating periodic, spiral-like almost periodic and spiral-like almost automorphic solutions of equation $ \eqref{1.1} $ when $ \alpha =0 $.

\section{The rotating solution, spiral-like almost periodic solution and spiral-like almost automorphic solution}

Below we give a main estimate of the equation \eqref{1.2}. Specifically, when $ \alpha = 0 $, the following lemma is the Lemma 3.1 in \cite{ref16}.

\begin{lemma}\label{lemma 3.1}
	Let $ 2\le r< 3 $ and $ 2<q\le 12/5 $. Assume that the exponents $ s,p $ and $ \kappa $ satisfy
	\begin{displaymath}
		s>0,1<p\le 2,\frac{s}{3}+\frac{1}{p}> \frac{1}{r}+\frac{2}{3} , \frac{1}{2}\le\frac{1}{\kappa}< \frac{1}{q}+\frac{1}{3}.
	\end{displaymath}	
 Then, if $ n>N>0 $, for every $ f\in BC\left ( \mathbb{R};{\dot{B}_{p,2} ^{-s} } \left ( \mathbb{R}^{3} \right ) \right )\cap BC\left ( \mathbb{R};{L^{\kappa}} \left ( \mathbb{R}^{3} \right ) \right )  $ satisfying $ \left \| f \right \|_{\mathcal{L} } \le \delta $, we have
	\begin{displaymath}
		\left \| v\left ( t \right )  \right \| _{X^{r,q} }\le C_{\ast } \left \| v\left ( t \right )  \right \|^{2}_{X^{r,q}} + C_{\ast \ast }\left ( \left \| f \right \|_{\mathcal{L} }  + \left \| f \right \| _{ \mathcal{K} _{N}}  \right )  ,
	\end{displaymath}	
	where $ \left \| f  \right \| _{ \mathcal{K} _{N}}= \sup\limits_{\left | t \right | \le  N} \left \| f\left ( t \right )  \right \|_{\dot{B}_{p,2} ^{-s} } + \sup\limits_{\left | t \right | \le  N} \left \| f \left ( t \right )  \right \|_{\kappa } $ and $ C_{\ast },C_{\ast \ast } $ are constants.
\end{lemma}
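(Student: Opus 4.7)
The plan is to decompose the right-hand side of \eqref{2.6} into its bilinear and forced contributions,
\[
v(t) = \mathcal{B}(v,v)(t) + F(t),
\]
where $\mathcal{B}(v,w)(t):=-\int_{-\infty}^{t} T(t-\tau)\mathbb{P}[(v(\tau)\cdot\nabla)w(\tau)]\,d\tau$ and $F(t):=\int_{-\infty}^{t} T(t-\tau)\mathbb{P} f(\tau)\,d\tau$, and to bound each piece separately in $\|\cdot\|_{X^{r,q}}$. The target estimate corresponds to showing $\|\mathcal{B}(v,v)\|_{X^{r,q}}\le C_{*}\|v\|_{X^{r,q}}^{2}$ and $\|F\|_{X^{r,q}}\le C_{**}(\|f\|_{\mathcal{L}}+\|f\|_{\mathcal{K}_N})$.

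For the bilinear term, I would first use $\nabla\cdot v=0$ to rewrite $(v\cdot\nabla)v=\nabla\cdot(v\otimes v)$, so that one derivative is transferred onto the semigroup kernel. Lemma \ref{lemma 2.1} (applied with $m=1$ for the $L^{r}$ component and with $m=2$ for the $\dot{W}^{1,q}$ component) then supplies the right pointwise-in-time smoothing factor. H\"older's inequality, combined with the Sobolev embedding $\dot{W}^{1,q}\hookrightarrow L^{3q/(3-q)}$, controls $\|v\otimes v\|_{L^{m}}$ by a product of $\|v\|_{r}$ and $\|\nabla v\|_{q}$, and hence by $\|v\|_{X^{r,q}}^{2}$. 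To make the resulting time convolution $\int_{-\infty}^{t}(t-\tau)^{-\alpha}\,d\tau$ finite I would choose the auxiliary H\"older index $m$ differently on the short-time window $\{t-\tau\le 1\}$ and the long-time window $\{t-\tau>1\}$, yielding $\alpha<1$ on the former and $\alpha>1$ on the latter; the ranges $2\le r<3$ and $2<q\le 12/5$ are precisely what makes both choices consistent.

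For the forced term $F$ a two-scale splitting is again the guiding principle, now feeding the two different norms of $f$ into the two regimes. On the short-time window, Lemma \ref{lemma 2.1} applied with $p=\kappa$ gives
\[
\|T(t-\tau)\mathbb{P}f(\tau)\|_{r}+\|\nabla T(t-\tau)\mathbb{P}f(\tau)\|_{q} \le C(t-\tau)^{-\gamma_{1}}\|f(\tau)\|_{\kappa}
\]
with $\gamma_{1}<1$; this is where the assumption $\frac{1}{2}\le\frac{1}{\kappa}<\frac{1}{q}+\frac{1}{3}$ is used. On the long-time window, I would combine Lemma \ref{lemma 2.2} with the Sobolev-type Besov embeddings $\dot{B}_{p,2}^{-s}\hookrightarrow \dot{B}_{r,2}^{-s-3(1/p-1/r)}$ and $\dot{B}_{r,2}^{0}\hookrightarrow L^{r}$ (and the analogous chain for the $\dot{W}^{1,q}$ component) to deduce an estimate of the form
\[
\|T(t-\tau)\mathbb{P}f(\tau)\|_{r}+\|\nabla T(t-\tau)\mathbb{P}f(\tau)\|_{q} \le C(t-\tau)^{-\gamma_{2}}\|f(\tau)\|_{\dot{B}_{p,2}^{-s}}
\]
with $\gamma_{2}>1$; this is exactly where the assumption $\frac{s}{3}+\frac{1}{p}>\frac{1}{r}+\frac{2}{3}$ enters. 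The local norm $\|f\|_{\mathcal{K}_N}$ appears after further splitting the $\tau$-integration into $\{|\tau|\le N\}$ and $\{|\tau|>N\}$: the former is absorbed into $C_{**}\|f\|_{\mathcal{K}_N}$, the latter into $C_{**}\|f\|_{\mathcal{L}}$.

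The main obstacle is the bookkeeping of the spatial and temporal exponents, namely verifying that with the stated ranges of $r,q,s,p,\kappa$ the two integrability thresholds $\alpha<1$ (near) and $\alpha>1$ (far) can be met simultaneously in the bilinear estimate, and that the analogous conditions $\gamma_{1}<1<\gamma_{2}$ hold in the forced estimate. No new PDE ideas beyond Lemma \ref{lemma 2.1}, Lemma \ref{lemma 2.2}, H\"older's inequality, and standard Sobolev and Besov embeddings are required.
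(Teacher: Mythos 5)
Your proposal follows essentially the same route as the paper: split the mild-solution formula into the bilinear and forced parts, cut the time integral into near and far windows, apply the $L^{p}$--$L^{q}$ smoothing of Lemma \ref{lemma 2.1} together with Lemma \ref{lemma 2.2}, H\"older and Sobolev embedding, and verify that the hypotheses on $r,q,s,p,\kappa$ make the singular time factors integrable at $0$ and at infinity, with the $\{|\tau|\le N\}$ versus $\{|\tau|>N\}$ split producing the $\|f\|_{\mathcal{K}_N}$ and $\|f\|_{\mathcal{L}}$ terms. The only real difference is cosmetic: for the $L^{r}$ bound of the bilinear far-field piece the paper transfers the derivative by duality onto $T^{*}(t-\tau)\varphi$ rather than writing $(v\cdot\nabla)v=\nabla\cdot(v\otimes v)$ and loading the derivative on the kernel, and on the short window it keeps the H\"older index within the range $p\le 2\le q$ required by Lemma \ref{lemma 2.1} (a constraint your choice of $m$ should be checked against), but these are equivalent bookkeeping devices rather than a different argument.
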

\begin{proof}
	Let $ n>N>0,t \in \left[ -N,N \right] $ and $ v \in BC\left( \mathbb{R};L_{\sigma }^{r} \left ( \mathbb{R}^{3} \right ) \right) \cap BC\left( \mathbb{R}; \dot{W}^{1,q} \left ( \mathbb{R}^{3} \right ) \right)  $. We have
	\begin{equation}\label{3.1}
		\begin{split}
			N\left( t\right) &= \int_{-\infty }^{t} T\left ( t-\tau \right ) \mathbb{P} \left [ \left ( u\left ( \tau  \right ) \cdot \nabla \right )v\left ( \tau \right ) \right ] d\tau \\
			& = \int_{-\infty }^{-n} T\left ( t-\tau \right ) \mathbb{P} \left [ \left ( u\left ( \tau  \right ) \cdot \nabla \right )v\left ( \tau \right ) \right ] d\tau \\
			&\quad + \int_{-n}^{t-1} T\left ( t-\tau \right ) \mathbb{P} \left [ \left ( u\left ( \tau  \right ) \cdot \nabla \right )v\left ( \tau \right ) \right ] d\tau \\
			&\quad + \int_{t-1}^{t} T\left ( t-\tau \right ) \mathbb{P} \left [ \left ( u\left ( \tau  \right ) \cdot \nabla \right )v\left ( \tau \right ) \right ] d\tau \\
			& := N_{1}\left( t\right) + N_{2}\left( t\right) + N_{3}\left( t\right).
		\end{split}
	\end{equation}
	Since $ 2\le r<3 $, there hold that $ 3/2 <{r}' \le 2 $ and $ 3<{\left( r/2 \right) }' \le \infty $, where $ \frac{1}{r} + \frac{1}{{r}'} = 1,\frac{1}{r/2}+ \frac{1}{{\left( r/2\right) }'} = 1 $. By integration by parts, the Hölder inequality and Lemma $ \ref{lemma 2.1} $, for all $ \varphi \in C_{0,\sigma }^{\infty } \left ( \mathbb{R}^{3} \right ) $, we have
	\begin{equation*}
		\begin{split}
			\left | \left \langle N_{1}\left ( t \right ),\varphi \right \rangle \right | 
			&= \left | \int_{-\infty }^{-n} \left \langle T\left ( t-\tau \right )\mathbb{P} \left [ \left ( u\left ( \tau  \right ) \cdot \nabla \right )v\left ( \tau \right ) \right ], \varphi \right \rangle \mathrm{d}\tau \right |  \\
			&= \left | \int_{-\infty }^{-n} \left \langle \mathbb{P} \left [ \left ( u\left ( \tau  \right ) \cdot \nabla \right )v\left ( \tau \right ) \right ],T^{\ast }\left ( t-\tau \right ) \varphi \right \rangle \mathrm{d}\tau \right |  \\
			&\le \int_{-\infty }^{-n} \left | \left \langle v\left ( \tau \right ) , \left ( u\left ( \tau  \right ) \cdot \nabla \right ) T^{\ast }\left ( t-\tau \right ) \varphi \right \rangle \right | \mathrm{d}\tau   \\
			&\le \int_{-\infty }^{-n} \left \| v\left ( \tau \right ) \right \|_{r} \left \| \left ( u\left ( \tau  \right ) \cdot \nabla \right ) T^{\ast }\left ( t-\tau \right ) \varphi \right \|_{r'} \mathrm{d}\tau \\
			&\le \int_{-\infty }^{-n} \left \| v\left ( \tau \right ) \right \|_{r} \left \| u\left ( \tau \right ) \right \|_{r}\left \| \nabla T^{\ast }\left ( t-\tau \right ) \varphi \right \|_{\varsigma } \mathrm{d}\tau \\
			&\le C\int_{-\infty}^{-n}\left ( t-\tau \right )^{-\frac{1}{2}-\frac{3}{2}\left ( \frac{1}{{r}'} -\frac{1}{\varsigma} \right )}\left\| \varphi\right\|_{{r}'}\left\| u\left( \tau \right) \right\|_{r}\left\| v\left( \tau \right) \right\|_{r} \mathrm{d}\tau \\
			&\le C \sup_{t\in \mathbb{R}} \left\| u\left( t \right)\right\|_{r} \sup_{t \in \mathbb{R}} \left\| v\left( t \right) \right\|_{r} \left\| \varphi\right\|_{{r}'} \int_{-\infty}^{-n}\left ( t-\tau \right )^{-\frac{1}{2}-\frac{3}{2r}}d\tau \\
			&\le \frac{2r}{3-r}\left ( t+n \right )^{\frac{1}{2}-\frac{3}{2r}}C \sup_{t\in \mathbb{R}} \left\| u\left( t \right)\right\|_{r} \sup_{t\in \mathbb{R}} \left\| v\left( t \right) \right\|_{r} \left\| \varphi\right\|_{{r}'}
		\end{split}
	\end{equation*}
	where $ \frac{1}{r}+\frac{1}{\varsigma }=\frac{1}{r'} $. By the duality of $ L_{\sigma}^{r}\left( \mathbb{R}^{3}\right) $ and $ L_{\sigma}^{{r}'}\left( \mathbb{R}^{3}\right) $, we have
	\begin{equation}\label{3.2}
		\begin{split}
			\left \| N_{1}\left ( t \right )  \right \|_{r} &\le \frac{2r}{3-r}\left ( t+n \right )^{\frac{1}{2}-\frac{3}{2r}}C \sup_{t\in \mathbb{R}} \left\| u\left( t \right)\right\|_{r} \sup_{t\in \mathbb{R}} \left\| v\left( t \right) \right\|_{r} \\
			& \le \frac{2r}{3-r}\left ( n-N \right )^{\frac{1}{2}-\frac{3}{2r}}C \sup_{t\in \mathbb{R}} \left\| u\left( t \right)\right\|_{r} \sup_{t\in \mathbb{R}} \left\| v\left( t \right) \right\|_{r}.
		\end{split}
	\end{equation}
	Similarly, we have
	\begin{equation}\label{3.3}
		\begin{split}
			\left \| N_{2}\left ( t \right )  \right \|_{r} &\le C \sup_{\left| t\right| \le n} \left\| u\left( t \right)\right\|_{r} \sup_{\left| t\right| \le n} \left\| v\left( t \right) \right\|_{r} \int_{-n}^{t-1}\left ( t-\tau \right )^{-\frac{1}{2}-\frac{3}{2r}}d\tau \\
			&= \frac{2r}{3-r}\left[ 1-\left ( t+n \right )^{\frac{1}{2}-\frac{3}{2r}}\right] C \sup_{\left| t\right| \le n} \left\| u\left( t \right)\right\|_{r} \sup_{\left| t\right| \le n} \left\| v\left( t \right) \right\|_{r} \\
			& \le C \sup_{\left| t\right| \le n} \left\| u\left( t \right)\right\|_{r} \sup_{\left| t\right| \le n} \left\| v\left( t \right) \right\|_{r}.
		\end{split}
	\end{equation}
 By Hölder inequality, we have
	\begin{equation}\label{3.4}
		\begin{split}
			\left \| N_{3}\left ( t \right )  \right \|_{r} &\le \int_{t-1}^{t} \left\|  T\left ( t-\tau \right ) \mathbb{P} \left [ \left ( u\left ( \tau  \right ) \cdot \nabla \right )v\left ( \tau \right ) \right ]\right\|_{r} d\tau \\
			&\le C \int_{t-1}^{t} \left ( t-\tau \right )^{-\frac{3}{2}\left( \frac{1}{k}-\frac{1}{r}\right)} \left\|  \left ( u\left ( \tau  \right ) \cdot \nabla \right )v\left ( \tau \right )\right\|_{k} d\tau \\
			&\le C \int_{t-1}^{t} \left ( t-\tau \right )^{-\frac{3}{2q}} \left\| u\left( \tau \right) \right\|_{r} \left\| \nabla v\left ( \tau \right )\right\|_{q} d\tau \\
			&\le C \sup_{\left| t\right| \le n}\left\| u\left( t\right) \right\|_{r} \sup_{\left| t\right| \le n}\left\| \nabla v\left ( t \right )\right\|_{q}\int_{t-1}^{t} \left ( t-\tau \right )^{-\frac{3}{2q}}d\tau \\
			&\le C \sup_{\left| t\right| \le n}\left\| u\left( t\right) \right\|_{r} \sup_{\left| t\right| \le n}\left\| \nabla v\left ( t \right )\right\|_{q},
		\end{split}
	\end{equation}
	where $ \frac{1}{k}:=\frac{1}{r} + \frac{1}{q} $ and $ 1<k<\frac{4}{3} $.\\
	By Lemma $ \ref{lemma 2.1} $ and Hölder inequality, we have
	\begin{equation}\label{3.5}
		\begin{split}
			\left \| \nabla N_{1}\left ( t \right )  \right \|_{q} 
			&\le \int_{-\infty}^{-n} \left\| \nabla T\left ( t-\tau \right ) \mathbb{P} \left [ \left ( u\left ( \tau  \right ) \cdot \nabla \right )v\left ( \tau \right ) \right ]\right\|_{q} d\tau \\
			&\le C \int_{-\infty}^{-n} \left ( t-\tau \right )^{-\frac{1}{2}-\frac{3}{2}\left( \frac{1}{k}-\frac{1}{q}\right)} \left\|  \left ( u\left ( \tau  \right ) \cdot \nabla \right )v\left ( \tau \right )\right\|_{k} d\tau \\
			&\le C \int_{-\infty}^{-n} \left ( t-\tau \right )^{-\frac{1}{2}-\frac{3}{2r}} \left\| u\left( \tau \right) \right\|_{r} \left\| \nabla v\left ( \tau \right )\right\|_{q} d\tau \\ 
			&\le \frac{2r}{3-r}\left ( t+n \right )^{\frac{1}{2}-\frac{3}{2r}}C \sup_{t\in \mathbb{R}} \left\| u\left( t \right)\right\|_{r} \sup_{t\in \mathbb{R}} \left\| \nabla v\left( t \right) \right\|_{q} \\
			& \le \frac{2r}{3-r}\left ( n-N \right )^{\frac{1}{2}-\frac{3}{2r}}C \sup_{t\in \mathbb{R}} \left\| u\left( t \right)\right\|_{r} \sup_{t\in \mathbb{R}} \left\| \nabla v\left( t \right) \right\|_{q},
		\end{split}
	\end{equation}
	\begin{equation}\label{3.6}
		\begin{split}
			\left \| \nabla N_{2}\left ( t \right )  \right \|_{q} 
			&\le C \int_{-n}^{t-1} \left ( t-\tau \right )^{-\frac{1}{2}-\frac{3}{2}\left( \frac{1}{k}-\frac{1}{q}\right)} \left\|  \left ( u\left ( \tau  \right ) \cdot \nabla \right )v\left ( \tau \right )\right\|_{k} d\tau \\
			&\le C \int_{-n}^{t-1} \left ( t-\tau \right )^{-\frac{1}{2}-\frac{3}{2r}} \left\| u\left( \tau \right) \right\|_{r} \left\| \nabla v\left ( \tau \right )\right\|_{q} d\tau \\
			&\le C \sup_{\left| t\right| \le n} \left\| u\left( t \right)\right\|_{r} \sup_{\left| t\right| \le n} \left\| \nabla v\left( t \right) \right\|_{q}.
		\end{split}
	\end{equation}
	It follows from the Sobolev embedding theorem $ \dot{W}^{1,q}\left ( \mathbb{R}^{3} \right )\hookrightarrow L^{q^{\ast}}\left ( \mathbb{R}^{3} \right ) $ that
	\begin{equation}\label{3.7}
		\begin{split}
			\left \| \nabla N_{3}\left ( t \right )  \right \|_{q} 
			&\le \int_{t-1}^{t} \left\| \nabla T\left ( t-\tau \right ) \mathbb{P} \left [ \left ( u\left ( \tau  \right ) \cdot \nabla \right )v\left ( \tau \right ) \right ]\right\|_{q} d\tau \\
			&\le C \int_{t-1}^{t} \left ( t-\tau \right )^{-\frac{1}{2}-\frac{3}{2}\left( \frac{1}{\iota }-\frac{1}{q}\right)} \left\|  \left ( u\left ( \tau  \right ) \cdot \nabla \right )v\left ( \tau \right )\right\|_{\iota} d\tau \\
			&\le C \int_{t-1}^{t} \left ( t-\tau \right )^{-\frac{1}{2}-\frac{3}{2q^{\ast}}} \left\| u\left( \tau \right) \right\|_{q^{\ast}} \left\| \nabla v\left ( \tau \right )\right\|_{q} d\tau \\ 
			&\le C \sup_{\left| t\right| \le n} \left\| \nabla u\left( t \right)\right\|_{q} \sup_{\left| t\right| \le n} \left\| \nabla v\left( t \right) \right\|_{q},
		\end{split}
	\end{equation}
	where $ \frac{1}{q^{\ast}} = \frac{1}{q}-\frac{1}{3},\frac{1}{\iota} = \frac{1}{q^{\ast}}+\frac{1}{q} $.
	Set
	\begin{equation}\label{3.8}
		\begin{split}
			F\left( t\right) &= \int_{-\infty}^{-n} T\left ( t-\tau \right ) \mathbb{P} f\left ( \tau \right ) d\tau + \int_{-n}^{t-1} T\left ( t-\tau \right ) \mathbb{P} f\left ( \tau \right ) d\tau + \int_{t-1}^{t} T\left ( t-\tau \right ) \mathbb{P} f\left ( \tau \right ) d\tau \\
			&:= F_{1}\left( t\right) + F_{2}\left( t\right) + F_{3}\left( t\right) 
		\end{split}
	\end{equation}
	for $ t \in \mathbb{R} $. Since $ 1<p\le 2\le r<3 $, by Lemma $ \ref{lemma 2.1} $ and Lemma $ \ref{lemma 2.2} $, for $ n>N $, we have 
	\begin{equation}\label{3.9}
		\begin{split}
			\left \| F_{1}\left ( t \right )  \right \|_{r} 
			&\le C\int_{-\infty }^{-n} \left ( t-\tau \right )^{-\frac{3}{2}\left ( \frac{1}{2}-\frac{1}{r} \right ) }\left \| T\left ( \frac{t-\tau}{2} \right ) \mathbb{P} f\left ( \tau \right ) \right \|_{2} d\tau \\
			&\le C\int_{-\infty }^{-n} \left ( t-\tau \right )^{-\frac{3}{2}\left ( \frac{1}{2}-\frac{1}{r} \right ) }\left \| e^{\frac{t-\tau}{2}\bar{\Delta}} \mathbb{P} f\left ( \tau \right ) \right \|_{2} d\tau \\
			&\le C\int_{-\infty }^{-n} \left ( t-\tau \right )^{-\frac{3}{2}\left ( \frac{1}{2}-\frac{1}{r} \right )-\frac{s}{2} }\left \| e^{\frac{t-\tau}{2}\bar{\Delta}} \mathbb{P} f\left ( \tau \right ) \right \|_{\dot{B}_{2,2} ^{-s}} d\tau \\
			&\le C\int_{-\infty }^{-n} \left ( t-\tau \right )^{-\frac{3}{2}\left ( \frac{1}{2}-\frac{1}{r} \right )-\frac{s}{2}-\frac{3}{2}\left ( \frac{1}{p}-\frac{1}{2} \right ) }\left \| f\left ( \tau \right ) \right \|_{\dot{B}_{p,2} ^{-s}} d\tau \\
			&\le C\int_{-\infty }^{-n} \left ( t-\tau \right )^{-\frac{3}{2}\left ( \frac{1}{p}-\frac{1}{r} \right )-\frac{s}{2}} d\tau \sup_{t\in \mathbb{R}}\left \| f\left ( t\right ) \right \|_{\dot{B}_{p,2} ^{-s}} \\
			&\le \frac{C}{\frac{3}{2}\left ( \frac{1}{p}-\frac{1}{r} \right )+\frac{s}{2}-1}\left ( t+n \right )^{1-\frac{3}{2}\left ( \frac{1}{p}-\frac{1}{r} \right )-\frac{s}{2}} \sup_{t\in \mathbb{R}}\left \| f\left ( t\right ) \right \|_{\dot{B}_{p,2} ^{-s}} \\
			&\le C \left ( n-N \right )^{1-\frac{3}{2}\left ( \frac{1}{p}-\frac{1}{r} \right )-\frac{s}{2}} \sup_{t\in \mathbb{R}}\left \| f\left ( t\right ) \right \|_{\dot{B}_{p,2} ^{-s}},
		\end{split}
	\end{equation}
	where $ 1-\frac{3}{2}\left ( \frac{1}{p}-\frac{1}{r} \right )-\frac{s}{2}<0 $ follows from the hypothesis on $ s $.\\
	Similarly, we have
	\begin{equation}\label{3.10}
		\begin{split}
			\left \| F_{2}\left ( t \right )  \right \|_{r} 
			&\le C\int_{-n}^{t-1} \left ( t-\tau \right )^{-\frac{3}{2}\left ( \frac{1}{2}-\frac{1}{r} \right ) }\left \| T\left ( \frac{t-\tau}{2} \right ) \mathbb{P} f\left ( \tau \right ) \right \|_{2} d\tau \\
			&\le C\int_{-n}^{t-1} \left ( t-\tau \right )^{-\frac{3}{2}\left ( \frac{1}{2}-\frac{1}{r} \right ) }\left \| e^{\frac{t-\tau}{2}\bar{\Delta}} \mathbb{P} f\left ( \tau \right ) \right \|_{2} d\tau \\
			&\le C\int_{-n}^{t-1} \left ( t-\tau \right )^{-\frac{3}{2}\left ( \frac{1}{2}-\frac{1}{r} \right )-\frac{s}{2}-\frac{3}{2}\left ( \frac{1}{p}-\frac{1}{2} \right ) }\left \| f\left ( \tau \right ) \right \|_{\dot{B}_{p,2} ^{-s}} d\tau \\
			&\le C\int_{-n}^{t-1} \left ( t-\tau \right )^{-\frac{3}{2}\left ( \frac{1}{p}-\frac{1}{r} \right )-\frac{s}{2}} d\tau \sup_{\left| t\right| \le n}\left \| f\left ( t\right ) \right \|_{\dot{B}_{p,2} ^{-s}} \\
			&\le C \sup_{\left| t\right| \le n}\left \| f\left ( t\right ) \right \|_{\dot{B}_{p,2} ^{-s}}.
		\end{split}
	\end{equation}
	Since $ \frac{6}{5} < \kappa \le 2\le r <3 $, by Lemma $ \ref{lemma 2.1} $, we have
	\begin{equation}\label{3.11}
		\begin{split}
			\left \| F_{3}\left ( t \right )  \right \|_{r} 
			&\le C\int_{t-1}^{t} \left ( t-\tau \right )^{-\frac{3}{2}\left ( \frac{1}{\kappa}-\frac{1}{r} \right ) }\left \| f\left ( \tau \right ) \right \|_{\kappa} d\tau \\
			&\le C\int_{t-1}^{t} \left ( t-\tau \right )^{-\frac{3}{2}\left ( \frac{1}{\kappa}-\frac{1}{r} \right ) }d\tau \sup_{\left| t\right| \le n}\left \| f\left ( t \right ) \right \|_{\kappa}  \\
			&\le C \sup_{\left| t\right| \le n}\left \| f\left ( t \right ) \right \|_{\kappa}.
		\end{split}
	\end{equation}
	Here, we remark that since $ r<3 $ and  $ q>2 $, it follows from the assumption on $ \kappa $ that $ \frac{1}{\kappa} < \frac{1}{q}+\frac{1}{3} < \frac{1}{2}+\frac{1}{r} < \frac{2}{3}+\frac{1}{r} $. Hence, $ \int_{t-1}^{t} \left ( t-\tau \right )^{-\frac{3}{2}\left ( \frac{1}{\kappa}-\frac{1}{r} \right ) } d\tau $ converges.\\
	Since $ 1<p \le 2<q \le 12/5 $, by Lemma $ \ref{lemma 2.1} $ and Lemma $ \ref{lemma 2.2} $, we have
	\begin{equation}\label{3.12}
		\begin{split}
			\left \| \nabla F_{1}\left ( t \right )  \right \|_{q} 
			&\le C\int_{-\infty }^{-n} \left ( t-\tau \right )^{-\frac{3}{2}\left ( \frac{1}{2}-\frac{1}{q} \right )-\frac{1}{2} }\left \| T\left ( \frac{t-\tau}{2} \right ) \mathbb{P} f\left ( \tau \right ) \right \|_{2} d\tau \\
			&\le C\int_{-\infty }^{-n} \left ( t-\tau \right )^{-\frac{3}{2}\left ( \frac{1}{2}-\frac{1}{q} \right )-\frac{1}{2} }\left \| e^{\frac{t-\tau}{2}\bar{\Delta}} \mathbb{P} f\left ( \tau \right ) \right \|_{2} d\tau \\
			&\le C\int_{-\infty }^{-n} \left ( t-\tau \right )^{-\frac{3}{2}\left ( \frac{1}{2}-\frac{1}{q} \right )-\frac{1}{2}-\frac{s}{2} }\left \| e^{\frac{t-\tau}{2}\bar{\Delta}} \mathbb{P} f\left ( \tau \right ) \right \|_{\dot{B}_{2,2} ^{-s}} d\tau \\
			&\le C\int_{-\infty }^{-n} \left ( t-\tau \right )^{-\frac{3}{2}\left ( \frac{1}{2}-\frac{1}{q} \right )-\frac{1}{2}-\frac{s}{2}-\frac{3}{2}\left ( \frac{1}{p}-\frac{1}{2} \right ) }\left \| f\left ( \tau \right ) \right \|_{\dot{B}_{p,2} ^{-s}} d\tau \\
			&\le C\int_{-\infty }^{-n} \left ( t-\tau \right )^{-\frac{3}{2}\left ( \frac{1}{p}-\frac{1}{q} \right )-\frac{1}{2}-\frac{s}{2}} d\tau \sup_{t\in \mathbb{R}}\left \| f\left ( t\right ) \right \|_{\dot{B}_{p,2} ^{-s}} \\
			&\le \frac{C}{\frac{3}{2}\left ( \frac{1}{p}-\frac{1}{q} \right )+\frac{s}{2}-\frac{1}{2}}\left ( t+n \right )^{\frac{1}{2}-\frac{3}{2}\left ( \frac{1}{p}-\frac{1}{q} \right )-\frac{s}{2}} \sup_{t\in \mathbb{R}}\left \| f\left ( t\right ) \right \|_{\dot{B}_{p,2} ^{-s}} \\
			&\le C \left ( n-N \right )^{\frac{1}{2}-\frac{3}{2}\left ( \frac{1}{p}-\frac{1}{q} \right )-\frac{s}{2}} \sup_{t\in \mathbb{R}}\left \| f\left ( t\right ) \right \|_{\dot{B}_{p,2} ^{-s}},
		\end{split}
	\end{equation}
	where $ \frac{1}{2}-\frac{3}{2}\left ( \frac{1}{p}-\frac{1}{q} \right )-\frac{s}{2} = -\frac{3}{2}\left( \frac{s}{3}+\frac{1}{p}-\frac{1}{q}-\frac{1}{3} \right) < -\frac{3}{2}\left( \frac{1}{r}+\frac{2}{3}-\frac{1}{q}-\frac{1}{3} \right) <0 $.  Similarly,
	\begin{equation}\label{3.13}
		\begin{split}
			\left \| \nabla F_{2}\left ( t \right )  \right \|_{q} 
			&\le C\int_{-n}^{t-1} \left ( t-\tau \right )^{-\frac{3}{2}\left ( \frac{1}{2}-\frac{1}{q} \right )-\frac{1}{2} }\left \| T\left ( \frac{t-\tau}{2} \right ) \mathbb{P} f\left ( \tau \right ) \right \|_{2} d\tau \\
			&\le C\int_{-n}^{t-1} \left ( t-\tau \right )^{-\frac{3}{2}\left ( \frac{1}{2}-\frac{1}{q} \right )-\frac{1}{2} }\left \| e^{\frac{t-\tau}{2}\bar{\Delta}} \mathbb{P} f\left ( \tau \right ) \right \|_{2} d\tau \\
			&\le C\int_{-n}^{t-1} \left ( t-\tau \right )^{-\frac{3}{2}\left ( \frac{1}{2}-\frac{1}{q} \right )-\frac{1}{2}-\frac{s}{2}-\frac{3}{2}\left ( \frac{1}{p}-\frac{1}{2} \right ) }\left \| f\left ( \tau \right ) \right \|_{\dot{B}_{p,2} ^{-s}} d\tau \\
			&\le C\int_{-n}^{t-1} \left ( t-\tau \right )^{-\frac{3}{2}\left ( \frac{1}{p}-\frac{1}{q} \right )-\frac{1}{2}-\frac{s}{2}} d\tau \sup_{\left| t\right| \le n}\left \| f\left ( t\right ) \right \|_{\dot{B}_{p,2} ^{-s}} \\
			&\le C \sup_{\left| t\right| \le n}\left \| f\left ( t\right ) \right \|_{\dot{B}_{p,2} ^{-s}}.
		\end{split}
	\end{equation}
	Since $ \frac{6}{5} < \kappa \le 2 <q \le 12/5 $, by Lemma $ \ref{lemma 2.1} $, we have
	\begin{equation}\label{3.14}
		\begin{split}
			\left \| \nabla F_{3}\left ( t \right )  \right \|_{q} 
			&\le C\int_{t-1}^{t} \left ( t-\tau \right )^{-\frac{3}{2}\left ( \frac{1}{\kappa}-\frac{1}{q} \right )-\frac{1}{2} }\left \| f\left ( \tau \right ) \right \|_{\kappa} d\tau \\
			&\le C\int_{t-1}^{t} \left ( t-\tau \right )^{-\frac{3}{2}\left ( \frac{1}{\kappa}-\frac{1}{q} \right )-\frac{1}{2} }d\tau \sup_{\left| t\right| \le n}\left \| f\left ( t \right ) \right \|_{\kappa}  \\
			&\le C \sup_{\left| t\right| \le n}\left \| f\left ( t \right ) \right \|_{\kappa}.
		\end{split}
	\end{equation}
	Since $ \frac{1}{\kappa} < \frac{1}{q}+\frac{1}{3} $, $ \int_{t-1}^{t} \left ( t-\tau \right )^{-\frac{3}{2}\left ( \frac{1}{\kappa}-\frac{1}{q} \right )-\frac{1}{2} }d\tau $ converges. 
	
	By $ \eqref{3.1}-\eqref{3.14} $, we have
	\begin{equation*}
		\begin{split}
			\left \| v\left ( t \right )  \right \| _{X^{r,q}} 
			&\le C\left( n\right) \left \{ \left ( \sup_{t\in \mathbb{R}}\left \| v\left ( t \right )  \right \| _{r} \right )^{2} + \left ( \sup_{t\in \mathbb{R}}\left \| \nabla v\left ( t \right )  \right \| _{q} \right )^{2} \right \} \\
			&\quad+ C \left \{ \left ( \sup_{\left| t\right| \le n}\left \| v\left ( t \right )  \right \| _{r} \right )^{2} + \left ( \sup_{\left| t\right| \le n}\left \| \nabla v\left ( t \right )  \right \| _{q} \right )^{2} \right \} \\
			&\quad+ \tilde{C} \left( n\right) \left \| f \right \| _{\mathcal{L}} + \tilde{C}\left \| f \right \| _{\mathcal{K}_{n}},
		\end{split}
	\end{equation*}
	where $ C,\tilde{C} $ are constants, and 
	\begin{displaymath}
		C\left( n\right) = \frac{2rC}{3-r}\left ( n-N \right )^{\frac{1}{2}-\frac{3}{2r}},
	\end{displaymath}
	\begin{displaymath}
		\tilde{C}\left( n\right) = \frac{C}{\frac{3}{2}\left ( \frac{1}{p}-\frac{1}{r} \right )+\frac{s}{2}-1}\left ( n-N \right )^{1-\frac{3}{2}\left ( \frac{1}{p}-\frac{1}{r} \right )-\frac{s}{2}}.
	\end{displaymath}
 Note that $ C\left( n\right),\tilde{C}\left( n\right)\to 0 $ as $ n\to \infty $. This completes the proof.
	
\end{proof}

\begin{theorem}
	Under assumptions of Lemma $\ref{lemma 3.1}$, there exist positive constants $\delta$ such that for every $ f\in BC\left ( \mathbb{R};{\dot{B}_{p,2} ^{-s} } \left ( \mathbb{R}^{3} \right ) \right )\cap BC\left ( \mathbb{R};{L^{\kappa}} \left ( \mathbb{R}^{3} \right ) \right )  $ satisfying $ \left \| f \right \|_{\mathcal{L} } \le \delta $. If $ \alpha = 0 $, i.e. $ Q = \begin{pmatrix}
		\cos \theta  & -\sin\theta   & 0 \\
		\sin \theta & \cos \theta & 0 \\
		0 & 0 & 1
	\end{pmatrix} $, there exists a positive constant $ K $ such that we have the following:\\
	(i) If $ f $ is $ \left( Q,T\right) $-affine-periodic, $ \eqref{1.1} $ admits a unique rotating periodic solution $ u\in X_{K}^{r,q} $ near the equilibrium; \\
	(ii) If $ f $ is spiral-like almost periodic, $ \eqref{1.1} $ admits a unique spiral-like almost periodic mild solution $ u\in X_{K}^{r,q} $ near the equilibrium;\\
	(iii) If $ f $ is spiral-like almost automorphic, $ \eqref{1.1} $ admits a unique spiral-like almost automorphic mild solution $ u\in X_{K}^{r,q} $ near the equilibrium,\\ 
    where $ X_{K}^{r,q} := \left \{u\in BC\left ( \mathbb{R};L_{\sigma }^{r} \left ( \mathbb{R}^{3} \right ) \right )\cap BC\left ( \mathbb{R}; \dot{W}^{1,q}\left ( \mathbb{R}^{3} \right ) \right ) \mid \left \| u \right \|_{X^{r,q} } \le K \right \}  $.
\end{theorem}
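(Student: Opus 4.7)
The plan is to combine a Banach fixed point construction with a uniqueness-based symmetry-transfer argument. The whole analysis takes place in the closed ball $X_{K}^{r,q}$ for a small radius $K$ to be fixed, and the smallness parameter $\delta$ is chosen afterwards.

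\textbf{Step 1 (Contraction).} I will set up the Picard operator
$\Phi(v)(t) := -\int_{-\infty}^{t} T(t-\tau)\mathbb{P}[(v(\tau)\cdot\nabla)v(\tau)]\,d\tau + \int_{-\infty}^{t} T(t-\tau)\mathbb{P}f(\tau)\,d\tau$
on $X_{K}^{r,q}$. Exactly the splitting and estimates proving Lemma $\ref{lemma 3.1}$ (the pieces $N_1,N_2,N_3$ and $F_1,F_2,F_3$ on $(-\infty,-n]$, $[-n,t-1]$, $[t-1,t]$, combined with Lemmas $\ref{lemma 2.1}$ and $\ref{lemma 2.2}$, letting $n\to\infty$) yield $\|\Phi(v)\|_{X^{r,q}} \le C_{\ast}\|v\|_{X^{r,q}}^{2} + C_{\ast\ast}\|f\|_{\mathcal{L}}$. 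A bilinear variant, obtained by polarising $(v_1\cdot\nabla)v_1-(v_2\cdot\nabla)v_2 = ((v_1-v_2)\cdot\nabla)v_1 + (v_2\cdot\nabla)(v_1-v_2)$ and running the same estimates, gives
$\|\Phi(v_1)-\Phi(v_2)\|_{X^{r,q}} \le C_{\ast}(\|v_1\|_{X^{r,q}}+\|v_2\|_{X^{r,q}})\|v_1-v_2\|_{X^{r,q}}$. Choosing $K$ so small that $2C_{\ast}K<1$ and then $\delta$ with $C_{\ast\ast}\delta\le K/2$, $\Phi$ maps $X_{K}^{r,q}$ into itself and is a strict contraction, producing a unique mild solution $u\in X_{K}^{r,q}$ of $\eqref{1.1}$ together with the Lipschitz bound $\|u_{f_{1}}-u_{f_{2}}\|_{X^{r,q}}\le C\|f_{1}-f_{2}\|_{\mathcal{L}}$ whenever $\|f_{i}\|_{\mathcal{L}}\le\delta$.

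\textbf{Step 2 (Part (i)).} When $\alpha=0$ the matrix $Q$ is an orthogonal rotation about $e_{3}$; in particular $Qe_{3}=e_{3}$, $\bar\Delta=\Delta$, $M=I$. A direct computation from the integral form shows that the Laplacian, convective term, Coriolis term and Helmholtz projection all commute with the action $u(t,x)\mapsto Q^{-1}u(t+T,Qx)$, so $\tilde u(t,x):=Q^{-1}u(t+T,Qx)$ is a mild solution of $\eqref{1.1}$ with driving force $Q^{-1}f(t+T,Qx)$, which equals $f(t,x)$ by the $(Q,T)$-affine-periodic hypothesis. Since $Q$ is orthogonal, $\|\tilde u\|_{X^{r,q}}=\|u\|_{X^{r,q}}\le K$, and the uniqueness in $X_{K}^{r,q}$ forces $\tilde u=u$, which is precisely the rotating periodic identity $u(t+T,x)=Qu(t,Q^{-1}x)$.

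\textbf{Step 3 (Parts (ii) and (iii)).} Both follow from the Lipschitz bound in Step 1 together with the rotational invariance used in Step 2. For (ii), given $\varepsilon>0$ and an almost-period $s\in I$ of $f$, the two functions $u(s+\cdot,x)$ and $Qu(\cdot,Q^{-1}x)$ are mild solutions in $X_{K}^{r,q}$ driven respectively by $f(s+\cdot,x)$ and $Qf(\cdot,Q^{-1}x)$, whose $\mathcal{L}$-distance is less than $\varepsilon$; the Lipschitz bound then yields $\|u(s+\cdot,x)-Qu(\cdot,Q^{-1}x)\|_{X^{r,q}}\le C\varepsilon$. For (iii), given $\{t'_n\}$, extract $\{t_n\}$ and $g$ from the almost-automorphy of $f$; let $v\in X_{K}^{r,q}$ be the unique mild solution driven by $g$ (its existence uses that $\|g\|_{\mathcal{L}}\le\delta$ passes to the limit). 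Applying the Lipschitz estimate to the pairs $(f(\cdot+t_{n},x),\,Qg(\cdot,Q^{-1}x))$ and $(g(\cdot-t_{n},x),\,Qf(\cdot,Q^{-1}x))$ produces the two convergences required by the definition.

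\textbf{Main obstacle.} The only genuinely new analytical input beyond Lemma $\ref{lemma 3.1}$ is the bilinear/Lipschitz version of its nonlinear estimate. It is obtained by repeating the proof of Lemma $\ref{lemma 3.1}$ with $(v\cdot\nabla)v$ replaced by the polarised expressions above; one must keep track of which factor carries the $\|\cdot\|_{r}$ or $\|\nabla\cdot\|_{q}$ norm in each of $N_{1},N_{2},N_{3}$ so that the final constants are independent of $t$ and of the approximation parameter $n$. Once this bilinear estimate is in hand, the symmetry transfer in Steps 2 and 3 is an automatic consequence of uniqueness in $X_{K}^{r,q}$ and the rotational invariance of $\eqref{1.1}$ about the vertical axis.
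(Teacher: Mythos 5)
Your Steps 1 and 2 and your treatment of part (ii) follow essentially the same route as the paper: the paper builds the same Picard map, uses the same polarisation $\Phi[v,v,f]-\Phi[\hat v,\hat v,f]=\Phi[v,v-\hat v,0]+\Phi[v-\hat v,\hat v,0]$ for the contraction, chooses $K$ as the smaller root of $C_{1}x^{2}-x+C_{2}=0$, and proves (i) by showing $Q^{-1}u(t+T,Qx)$ solves the same equation and invoking uniqueness (the paper verifies this on the differential form, you on the integral form --- a harmless difference). Part (ii) in the paper is exactly your Lipschitz-in-$f$ absorption argument, yielding the bound $C_{2}\varepsilon/(1-2C_{1}K)$.

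The genuine gap is in part (iii). You treat the almost-automorphic limits as if $\sup_{t}\|f(t+t_{n},x)-Qg(t,Q^{-1}x)\|$ tends to zero, so that the Lipschitz estimate $\|u_{f_{1}}-u_{f_{2}}\|_{X^{r,q}}\le C\|f_{1}-f_{2}\|_{\mathcal{L}}$ applies directly. But almost automorphy only provides convergence \emph{for each fixed} $t$ (this is how the paper's proof reads the definition: ``for each $t\in\mathbb{R}$''), not uniformly in $t$; uniform convergence would essentially collapse (iii) into (ii). Since the mild solution at time $t$ integrates the forcing over all $\tau\le t$, pointwise-in-$\tau$ convergence of the forcing does not pass through the Duhamel integral without an additional argument. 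The paper supplies this by fixing $t$, splitting $\int_{-\infty}^{t}=\int_{-\infty}^{t-N}+\int_{[t-N,t]\setminus B}+\int_{B}$, killing the tail using $C(N),\tilde C(N)\to 0$ from Lemma \ref{lemma 3.1}, applying Egoroff's theorem to get uniform convergence on $[t-N,t]\setminus B$, and making the exceptional set $B$ of small measure so its contribution is below $\varepsilon/3$; only then does the $2C_{1}K<1$ absorption close the estimate pointwise in $t$. Your proposal omits this entire mechanism, so as written it does not establish (iii) under the standard (pointwise) notion of almost automorphy that the paper's proof actually uses.
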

\begin{proof}
	(i) Since $ f $ is $ \left( Q,T\right) $-affine-periodic, we first show that equation $ \eqref{1.2} $ has a unique mild solution. To simplify arguments, by scaling $ v\to \delta v $,$ f\to \delta f $, $ \eqref{2.6} $ becomes
	\begin{displaymath}
		v\left ( t \right ):=-\int_{-\infty }^{t} \delta T \left ( t-\tau  \right ) \mathbb{P} \left [ \left ( v\left ( \tau  \right ) \cdot \nabla  \right )v\left ( \tau  \right )   \right ]d\tau + \int_{-\infty }^{t}  T \left ( t-\tau  \right ) \mathbb{P}   f\left ( \tau  \right )  d\tau.
	\end{displaymath}
	By Lemma \ref{lemma 3.1}, we have the following estimate:
	\begin{equation}
		\begin{split}\label{3.15}
			\left \| v\left ( t \right )  \right \| _{X^{r,q}}&\le \delta C\left ( n \right ) \left \{ \left ( \sup_{t\in \mathbb{R}}\left \| v\left ( t \right )  \right \| _{r} \right )^{2} + \left ( \sup_{t\in \mathbb{R}}\left \| \nabla v\left ( t \right )  \right \| _{q} \right )^{2}\right \} \\
			&\quad + \delta C\left \{ \left ( \sup_{\left | t \right |\le n }\left \| v\left ( t \right )  \right \|_{r}   \right ) ^{2} +  \left ( \sup_{\left | t \right |\le n}\left \| \nabla v\left ( t \right )  \right \|_{q}   \right ) ^{2}\right \} \\
			&\quad + \tilde{C}\left ( n \right ) \left \| f \right \| _{\mathcal{L} } + \tilde{C} \left \| f \right \| _{\mathcal{K}_{N} }.
		\end{split}  
	\end{equation}
	Let 
	\begin{displaymath}
		\varPhi\left ( u,v,f \right )\left( t\right) :=-\int_{-\infty }^{t} T \left ( t-\tau  \right ) \mathbb{P} \left [ \left(  u\left ( \tau  \right ) \cdot \nabla \right) v\left ( \tau  \right ) \right ]d\tau + \int_{-\infty }^{t}  T \left ( t-\tau  \right ) \mathbb{P}   f\left ( \tau  \right )  d\tau.
	\end{displaymath}
	\indent\setlength{\parindent}{2em} Step 1: We show that $ \Phi $ maps $ X^{r,q}_{K} $ into itself. Fix $ N=1,n=2 $, by $ \eqref{3.15} $, we have
	\begin{displaymath}
		\begin{split}
			\left \| v \right \| _{X^{r,q}}&\le C_{1}\left \|v  \right \| _{X^{r,q}}^{2} + C_{2}\left ( \sup\limits_{\mathbb{R} } \left \| f\left ( \cdot  \right )  \right \|_{\dot{B}_{p,2} ^{-s}} + {\sup\limits_{\mathbb{R} } \left \| f\left ( \cdot  \right )  \right \|_{\kappa }}  \right ) \\
			&= C_{1}\left \|v \right \| _{X^{r,q}}^{2} + C_{2},
		\end{split}
	\end{displaymath}
	where $ C_{1}=\delta C\left( 2\right) +\delta C $,$ C_{2}=\tilde{C}\left( 2\right) +\tilde{C} $. Let $ K $ be the smaller root of the equation $ C_{1}x^{2}-x+C_{2} =0 $,
	\begin{displaymath}
		K=\frac{1-\sqrt{1-4C_{1}C_{2}} }{2C_{1}} = \frac{2C_{2} }{1+\sqrt{1-4C_{1}C_{2}}} \to C_{2}\quad as \quad \delta \to 0^{+}.
	\end{displaymath}
	Hence when $ \left \| v \right \| _{X^{r,q}} < K $, we have 
	\begin{displaymath}
		\begin{split}
			\left \| \Phi \left ( v,v,f \right )  \right \| _{X^{r,q}} &< C_{1} \left ( \frac{1-\sqrt{1-4C_{1}C_{2}} }{2C_{1}} \right ) ^{2}+C_{2}\\
			&=K.
		\end{split}
	\end{displaymath}
	Then the operator $ \Phi $ maps $ X^{r,q}_{K} $ into itself. \\
	\indent\setlength{\parindent}{2em} Step 2: We will show that $ \Phi $ is a contraction mapping on $ X^{r,q}_{K} $. Suppose that $ v\left( t,x\right)  $ and $ \hat{v}\left( t,x\right)  $ are bounded mild solutions to $ \eqref{1.2} $. Then, we have
	\begin{displaymath}
		\Phi \left [ v,v,f \right ] - \Phi \left [ \hat{v},\hat{v},f \right ] = \Phi \left [ v,v-\hat{v},0 \right ] + \Phi \left [ v-\hat{v},\hat{v},0 \right ] .
	\end{displaymath}
	It follows that 
	\begin{displaymath}
		\begin{split}
			\left \| \Phi \left [ v,v,f \right ] - \Phi \left [ \hat{v},\hat{v},f \right ]  \right \|_{X^{r,q}} 
			\le  C_{1}\left( \left \| v \right \| _{X^{r,q}} + \left \| \hat{v} \right \| _{X^{r,q}}\right) \left \| v-\hat{v} \right \| _{X^{r,q}} 
			\le 2C_{1}K \left \| v-\hat{v} \right \| _{X^{r,q}}.
		\end{split}
	\end{displaymath}
    There are constants $ \delta $ satisfing $ 0<\delta<1 $ such that
	\begin{displaymath}
		2C_{1}K = 1-\sqrt{1-4C_{1}C_{2}} < 1.
	\end{displaymath}
	Hence, $ \Phi \left ( v,v,f \right ) $ is a contraction mapping on $ X^{r,q}_{K} $. According to Banach's theorem, there exists a unique bounded mild solution $ v\left( t,x\right) $ to equation $ \eqref{1.2} $. Specifically, when $ \alpha = 0 $, equation $ \eqref{1.2} $ is the original equation $ \eqref{1.1} $. Thus, for equation $ \eqref{1.1} $, there exists a unique bounded mild solution $ u \left( t,x\right) $. In the following, we only need to show that $ Q^{-1} u\left ( t+T,Qx \right ) $ is a solution to equation $ \eqref{1.1} $, where $ Q = \begin{pmatrix}
		\cos \theta  & -\sin\theta   & 0 \\
		\sin \theta & \cos \theta & 0 \\
		0 & 0 & 1
	\end{pmatrix} $ is the orthogonal matrix.\\
	Since $ u\left( t,x\right)  $ is a mild solution of equation $ \eqref{1.1} $, it satisfies
	\begin{displaymath}
		\frac{\partial u\left( t,x\right) }{\partial t} = \Delta u\left( t,x\right)  - le_{3}\times u\left( t,x\right)  - \left ( u\left( t,x\right) \cdot \nabla  \right ) u\left( t,x\right)  - \nabla \pi\left( t,x\right)   + f\left( t,x\right).
	\end{displaymath}
	Then, we have
	\begin{equation*}
		\begin{aligned}
			&\frac{\partial Q^{-1} u\left ( t+T,Qx \right )}{\partial t} = Q^{-1} \frac{\partial u\left ( t+T,Qx \right )}{\partial \left ( t+T \right )} = Q^{-1} \frac{\partial u\left ( t+T,y \right )}{\partial \left ( t+T \right )}\\
			&=  Q^{-1} [ {\Delta  _{y} u\left ( t+T,y \right )}  - { le_{3}\times u\left ( t+T,y \right )} - {\left ( u\left ( t+T,y \right) \cdot \nabla _{y} \right )u\left ( t+T,y \right)} \\ & \quad -  {\nabla _{y}\pi \left ( t+T,y \right)} + {f\left ( t+T,y \right)} ] \\
			&=  Q^{-1} [ Q {\Delta \left (Q^{-1}u\left ( t+T,Qx \right )  \right ) }  - {Q le_{3}\times \left ( Q^{-1}u\left ( t+T,Qx \right ) \right ) }\\&\quad - {Q\left (Q^{-1} u\left ( t+T,Qx \right) \cdot \nabla  \right )\left ( Q^{-1}u\left ( t+T,Qx \right) \right ) } - {Q\nabla \pi \left ( t,x \right)} \\&\quad + {f\left ( t+T,Qx \right)} ]\\
			&={\Delta \left (Q^{-1}u\left ( t+T,Qx \right )  \right ) }  - {le_{3}\times \left ( Q^{-1}u\left ( t+T,Qx \right ) \right ) }\\&\quad - {\left (Q^{-1} u\left ( t+T,Qx \right) \cdot \nabla  \right )\left ( Q^{-1}u\left ( t+T,Qx \right) \right ) } - {\nabla \pi \left ( t,x \right)}\\&\quad + {Q^{-1}f\left ( t+T,Qx \right)} \\
			&={\Delta \left (Q^{-1}u\left ( t+T,Qx \right )  \right ) }  - {le_{3}\times \left ( Q^{-1}u\left ( t+T,Qx \right ) \right ) }\\&\quad - {\left (Q^{-1} u\left ( t+T,Qx \right) \cdot \nabla  \right )\left ( Q^{-1}u\left ( t+T,Qx \right) \right ) } - {\nabla \pi \left ( t,x \right)} + {f\left ( t,x \right)},
		\end{aligned}
	\end{equation*}
	where\begin{equation*}
	y := Qx = \begin{pmatrix}
			\cos \theta  & -\sin\theta   & 0 \\
			\sin \theta & \cos \theta & 0 \\
			0 & 0 & 1
		\end{pmatrix} \begin{pmatrix}
			x_{1} \\
			x_{2} \\
			x_{3}
		\end{pmatrix}  = \begin{pmatrix}
			x_{1} \cos \theta  - x_{2} \sin \theta  \\
			x_{1} \sin \theta  + x_{2} \cos \theta  \\
			x_{3}
		\end{pmatrix}.  
	\end{equation*}
	On the other hand, by Lemma $ \ref{lemma 3.1} $, $ Q^{-1} u\left ( t+T,Qx \right ) $ is bounded,
	\begin{displaymath}
		\begin{split}
			\left \| Q^{-1} u\left ( t+T,Qx \right ) \right \| \le \left \| Q^{-1} \right\| \left\| u\left ( t+T,Qx \right ) \right \|   
			\le C_{1} \left \| u\left ( t \right )  \right \|^{2}_{X^{r,q}} + C_{2}.
		\end{split}
	\end{displaymath}
By the uniqueness of the mild solution, we have $ u\left( t,x\right) = Q^{-1} u\left ( t+T,Qx \right ) $.
	Thus, equation $\eqref{1.1}$ has a unique rotating periodic solution.\\
	(ii) Based on the previous derivation of the semigroup of operators of equation $ \eqref{1.2} $, we obtain a mild solution of $ \eqref{1.1} $:
	\begin{displaymath}
		\begin{split}
			u\left ( t \right ) :=-\int_{-\infty}^{t} H\left ( t-\tau   \right ) \mathbb{P} \left [ \left ( u\left ( \tau  \right ) \cdot \nabla \right )u\left ( \tau \right ) \right ]d\tau + \int_{-\infty}^{t} H\left ( t-\tau   \right ) \mathbb{P} f\left( \tau\right) d\tau,
		\end{split}
	\end{displaymath}
	where 
	\begin{displaymath}
		\begin{split}
			H\left( t\right)g := \mathcal{F}^{-1} \left[  \cos\left( l\frac{\xi_{3}}{\left| \xi\right|}t\right) e^{-\left| \xi\right|^{2}t}I\widehat{g}\left( \xi\right) + \sin\left( l\frac{\xi_{3}}{\left|\xi\right|}t \right) e^{-\left| \xi\right|^{2}t}R^{\ast}\left( \xi\right) \widehat{g}\left( \xi\right)\right],
		\end{split}
	\end{displaymath}
	\begin{displaymath}
		R^{\ast}\left ( \xi \right ) = \frac{1}{\left | \xi \right |}\begin{pmatrix}
			0 & \xi_{3} & -\xi_{2} \\
			-\xi_{3} & 0 & \xi_{1} \\
			\xi_{2} & -\xi_{1} & 0
		\end{pmatrix},t \ge 0,g \in L_{\sigma}^{p}\left( \mathbb{R} ^{3}\right).
	\end{displaymath}
	Similar to the proof above, we obtain that there is a unique mild solution $ u\in X_{K}^{r,q} $ to $ \eqref{1.1} $.
	For simplicity we write $ w\left( t,x\right) :=Qu\left(t,Q^{-1}x \right) $. Using the coordinate transformation, we obtain the following equation:
	\begin{equation}\label{3.16}
		\frac{\partial w\left ( t,x \right ) }{\partial t} = \Delta w\left( t,x\right)  - le_{3}\times w\left( t,x\right)  - \left ( w\left( t,x\right) \cdot \nabla  \right ) w\left( t,x\right)  - \nabla \pi\left( t,Q^{-1}x\right) + Qf\left( t,Q^{-1}x\right),
	\end{equation}
	whose mild solution is
	\begin{displaymath}
		w\left ( t \right ) :=-\int_{-\infty}^{t} H\left ( t-\tau   \right ) \mathbb{P} \left [ \left ( w\left ( \tau  \right ) \cdot \nabla \right )w\left ( \tau \right ) \right ]d\tau + \int_{-\infty}^{t} H\left ( t-\tau   \right ) \mathbb{P}\left[  Qf\left( \tau,Q^{-1}x\right) \right]d\tau.
	\end{displaymath}
 Since $ f $ is spiral-like almost periodic, for every $ \varepsilon >0 $ there exists a $ T\left( \varepsilon\right) >0 $ such that for any interval $ I $ of length $ T $ there is an $ s $ in $ I $ such that $ \sup\limits_{\mathbb{R}}\left \| f\left( s+\cdot ,x\right) -Qf\left( \cdot,Q^{-1}x\right) \right \| _{\mathcal{L}} <\varepsilon $.\\
	From variable substitution, we have
	\begin{displaymath}
		\begin{split}
			u\left( s+t,x\right) &= \int_{-\infty}^{s+t} H\left ( s+t-\tau   \right ) \left\lbrace -\mathbb{P} \left[ \left ( u\left ( \tau  \right ) \cdot \nabla \right )u\left ( \tau \right ) \right]+\mathbb{P} f\left( \tau\right)\right\rbrace d\tau \\
			&= \int_{-\infty}^{t} H\left ( t-\tau \right ) \left\lbrace -\mathbb{P} \left[ \left ( u\left ( s+\tau \right ) \cdot \nabla \right )u\left ( s+\tau \right ) \right]+\mathbb{P} f\left( s+\tau\right)\right\rbrace d\tau.
		\end{split}
	\end{displaymath}
Similar to the proof of Lemma $ \ref{lemma 3.1} $, we have
	\begin{displaymath}
		\begin{split}
		&\left \| u\left ( s+t,x \right ) -Qu\left ( t,Q^{-1}x \right )  \right \|_{X^{r,q}} =\left \| u\left ( s+t,x \right ) -w\left ( t,x \right )  \right \|_{X^{r,q}}\\
		&= \left\| -\int_{-\infty}^{t} H\left ( t-\tau \right )\left\lbrace  \mathbb{P} \left[\left ( u\left ( \tau+s  \right ) \cdot \nabla \right )u\left ( \tau+s \right )\right] -\mathbb{P}\left[ \left ( w\left ( \tau \right ) \cdot \nabla \right )w\left ( \tau\right ) \right]\right\rbrace d\tau\right.\\
		&\left.\quad + \int_{-\infty}^{t} H\left ( t-\tau \right )\left\lbrace  \mathbb{P} f\left( \tau+s\right)-\mathbb{P} \left[ Qf\left( \tau,Q^{-1}x\right)\right] \right\rbrace  d\tau\right\| _{X^{r,q}}\\
		& \le C_{1}\left( \left \| u\left( s+t,x\right)  \right \| _{X^{r,q}} + \left \| w\left( t,x\right)  \right \| _{X^{r,q}}\right) \left \| u\left ( s+t,x \right ) -w\left ( t,x \right )  \right \|_{X^{r,q}} \\
		&\quad+ C_{2}\sup\limits_{\mathbb{R}}\left \| f\left( s+\cdot ,x\right) -Qf\left( \cdot,Q^{-1}x\right) \right \| _{\mathcal{L}}\\
		& \le 2C_{1}K\left \| u\left ( s+t,x \right ) -w\left ( t,x \right )  \right \|_{X^{r,q}} +C_{2}\varepsilon.
		\end{split}
	\end{displaymath}
Hence,
\begin{displaymath}
	\left \| u\left ( s+t,x \right ) -Qu\left ( t,Q^{-1}x \right )  \right \|_{X^{r,q}} \le \frac{C_{2}\varepsilon}{1-2C_{1}K},
\end{displaymath}
where $ 2C_{1}K = 1-\sqrt{1-4C_{1}C_{2}} < 1 $. Then $ u\left( t, x\right) $ is the spiral-like almost periodic mild solution.\\
(iii) Since $ f $ is spiral-like almost automorphic, for each sequence $ \left\lbrace t'_{n}\right\rbrace $ there exists a subsequence $ \left\lbrace t_{n}\right\rbrace  $ and $ \tilde{f}\in BC\left ( \mathbb{R};{\dot{B}_{p,2} ^{-s} } \left ( \mathbb{R}^{3} \right ) \right )\cap BC\left ( \mathbb{R};{L^{\kappa}} \left ( \mathbb{R}^{3} \right ) \right ) $ such that 
\begin{displaymath}
	\begin{split}
		\lim\limits_{n \to \infty} \left \| f\left( t+t_{n} ,x\right) -Q\tilde{f}\left( t,Q^{-1}x\right) \right \| _{\mathcal{L}} = 0 , \\
		\lim\limits_{n \to \infty} \left \| \tilde{f}\left( t-t_{n} ,x\right) -Qf\left( t,Q^{-1}x\right) \right \| _{\mathcal{L}} = 0 
	\end{split}
\end{displaymath}
for each $ t\in \mathbb{R} $. By the previous proof, there is $ \tilde{u}\left ( t \right ) $ with $ \left \| \tilde{u}\left ( t \right )  \right \| _{X^{r,q} }\le K $ satisfying the integral equation
\begin{displaymath}
	\tilde{u}\left ( t,x \right ) := -\int_{-\infty}^{t} H\left ( t-\tau   \right ) \mathbb{P} \left [ \left ( \tilde{u}\left ( \tau  \right ) \cdot \nabla \right )\tilde{u}\left ( \tau \right ) \right ]d\tau + \int_{-\infty}^{t} H\left ( t-\tau \right ) \mathbb{P} \tilde{f}\left( \tau,x\right) d\tau.
\end{displaymath}
Similarly, $ \tilde{w}\left( t,x\right) := Q\tilde{u}\left ( t,Q^{-1}x \right )$, then $ \left \| \tilde{w}\left ( t \right )  \right \| _{X^{r,q} }\le K $ and $ \tilde{w}\left( t,x\right) $ satisfies the following equation:
\begin{displaymath}
	\tilde{w}\left ( t,x \right ) = -\int_{-\infty}^{t} H\left ( t-\tau   \right ) \mathbb{P} \left [ \left ( \tilde{w}\left ( \tau  \right ) \cdot \nabla \right )\tilde{w}\left ( \tau \right ) \right ]d\tau + \int_{-\infty}^{t} H\left ( t-\tau   \right ) \mathbb{P}\left[  Q\tilde{f}\left( \tau,Q^{-1}x\right) \right]d\tau.
\end{displaymath}
	From variable substitution, we have
\begin{displaymath}
	\begin{split}
		u\left( t+t_{n},x\right) &= \int_{-\infty}^{t+t_{n}} H\left ( t+t_{n}-\tau   \right ) \left\lbrace -\mathbb{P} \left[ \left ( u\left ( \tau  \right ) \cdot \nabla \right )u\left ( \tau \right ) \right]+\mathbb{P} f\left( \tau\right)\right\rbrace d\tau \\
		&= \int_{-\infty}^{t} H\left ( t-\tau \right ) \left\lbrace -\mathbb{P} \left[ \left ( u\left ( \tau+t_{n}  \right ) \cdot \nabla \right )u\left ( \tau+t_{n} \right ) \right]+\mathbb{P} f\left( \tau+t_{n}\right)\right\rbrace d\tau,
	\end{split}
\end{displaymath}
and
\begin{displaymath}
	\begin{split}
	    &u\left ( t+t_{n},x \right ) -\tilde{w}\left ( t,x \right ) \\
		&= -\int_{-\infty}^{t} H\left ( t-\tau \right )\left\lbrace  \mathbb{P} \left[\left ( u\left ( \tau+t_{n}  \right ) \cdot \nabla \right )u\left ( \tau+t_{n} \right )\right] -\mathbb{P}\left[ \left ( \tilde{w}\left ( \tau \right ) \cdot \nabla \right )\tilde{w}\left ( \tau\right ) \right]\right\rbrace d\tau\\
		&\quad + \int_{-\infty}^{t} H\left ( t-\tau \right )\left\lbrace  \mathbb{P} f\left( \tau+t_{n}\right)-\mathbb{P} \left[ Q\tilde{f}\left( \tau,Q^{-1}x\right)\right] \right\rbrace  d\tau  \\
		&:=-G\left( u,\tilde{w}\right) \left( t\right) + P\left(t \right).
	\end{split}
\end{displaymath}
Without loss of generality, fix $ t\in \mathbb{R} $. For any given $ \varepsilon>0 $, there exists $ N > 0 $ such that
\begin{displaymath}
	\begin{split}
		&\left \| \int_{-\infty}^{t-N} H\left ( t-\tau \right )\left\lbrace  \mathbb{P} \left[\left ( u\left ( \tau+t_{n}  \right ) \cdot \nabla \right )u\left ( \tau+t_{n} \right )\right] -\mathbb{P}\left[ \left ( \tilde{w}\left ( \tau \right ) \cdot \nabla \right )\tilde{w}\left ( \tau\right ) \right]\right\rbrace d\tau \right \|_{X^{r,q}} \\
		&\quad + \left \| \int_{-\infty}^{t-N} H\left ( t-\tau \right )\left\lbrace  \mathbb{P} f\left( \tau+t_{n}\right)-\mathbb{P} \left[ Q\tilde{f}\left( \tau,Q^{-1}x\right)\right] \right\rbrace  d\tau \right \|_{X^{r,q}}\\ 
		&\le 2C\left( N\right) K^{2} + 2\tilde{C}\left( N\right) \sup\limits_{t\in \mathbb{R}}\left(\left \| f\left ( t+t_{n} \right )  \right \| _{\mathcal{L}} + \left \| Q\tilde{f}\left( t,Q^{-1}x\right) \right \| _{\mathcal{L}} \right) \\
		&\le \frac{\varepsilon }{3}. 
	\end{split}
\end{displaymath}
From Egoroff’s theorem, given any $ \eta > 0 $, there exists a closed set $ B\subset \left [ t-N,t \right ]  $, with measure $ \left | B \right | <\eta $ and a subsequence $ \left\lbrace t_{n}\right\rbrace  $ and an integer $ N_{1} > 0 $ such that
\begin{displaymath}
		\sup\limits_{n>N_{1}}\sup\limits_{\left [ t-N,t \right ]\setminus B}\left \| f\left ( \cdot+t_{n}\right ) -Q\tilde{f}\left( \cdot,Q^{-1}x\right)  \right \| _{\mathcal{L}} <\frac{\varepsilon }{3\left ( C_{2}+1 \right ) }.
\end{displaymath}
We have
\begin{displaymath}
	\begin{split}
		G\left( u,\tilde{w}\right) \left( t\right) 
		&=\int_{-\infty}^{t-N} H\left ( t-\tau \right )\left\lbrace  \mathbb{P} \left[\left ( u\left ( \tau+t_{n}  \right ) \cdot \nabla \right )u\left ( \tau+t_{n} \right )\right] -\mathbb{P}\left[ \left ( \tilde{w}\left ( \tau \right ) \cdot \nabla \right )\tilde{w}\left ( \tau\right ) \right]\right\rbrace d\tau\\
		&\quad + \! \int_{\left [ t-N,t \right ]\setminus B } \!\! H\left ( t-\tau \right )\left\lbrace  \mathbb{P} \left[\left ( u\left ( \tau+t_{n}  \right ) \cdot \nabla \right )u\left ( \tau+t_{n} \right )\right] -\mathbb{P}\left[ \left ( \tilde{w}\left ( \tau \right ) \cdot \nabla \right )\tilde{w}\left ( \tau\right ) \right]\right\rbrace d\tau\\
		&\quad +\! \int_{ B }H\left ( t-\tau \right )\left\lbrace  \mathbb{P} \left[\left ( u\left ( \tau+t_{n}  \right ) \cdot \nabla \right )u\left ( \tau+t_{n} \right )\right] -\mathbb{P}\left[ \left ( \tilde{w}\left ( \tau \right ) \cdot \nabla \right )\tilde{w}\left ( \tau\right ) \right]\right\rbrace d\tau\\
		&:=G_{1}\left( t\right) +G_{2}\left( t\right)+G_{3}\left( t\right),
		\end{split}
\end{displaymath}
and
\begin{displaymath}
	\begin{split}
		P\left( t\right)
	    &=\int_{-\infty}^{t-N} H\left ( t-\tau \right )\left\lbrace  \mathbb{P} f\left( \tau+t_{n}\right)-\mathbb{P} \left[ Q\tilde{f}\left( \tau,Q^{-1}x\right)\right] \right\rbrace  d\tau\\
		&\quad + \int_{\left [ t-N,t \right ]\setminus B }H\left ( t-\tau \right )\left\lbrace  \mathbb{P} f\left( \tau+t_{n}\right)-\mathbb{P} \left[ Q\tilde{f}\left( \tau,Q^{-1}x\right)\right] \right\rbrace d\tau\\
		&\quad +\int_{ B }H\left ( t-\tau \right )\left\lbrace  \mathbb{P} f\left( \tau+t_{n}\right)-\mathbb{P} \left[ Q\tilde{f}\left( \tau,Q^{-1}x\right)\right] \right\rbrace  d\tau\\
		&:=P_{1}\left( t\right)+P_{2}\left( t\right)+P_{3}\left( t\right).
	\end{split}
\end{displaymath}
We take $ \eta $ small enough such that
\begin{displaymath}
	\sup\limits_{B}\left\| G_{3}\left( t\right) +P_{3}\left( t\right) \right\| _{X^{r,q}} < \frac{\varepsilon }{3}.
\end{displaymath}
From the above inequalities, we obtain
     \begin{displaymath}
     	\begin{split}
     		\left \| u\left ( t+t_{n},x \right ) - \tilde{w}\left ( t,x \right )  \right \|_{X^{r,q}} 
     		&= \left \| -G\left( u,\tilde{w}\right) \left( t\right) + P\left(t \right) \right \|_{X^{r,q}}\\
     		&\le \frac{\varepsilon}{3} + \frac{\varepsilon}{3} + \frac{\varepsilon}{3} + \left \| G_{2} \left( t\right) \right \|_{X^{r,q}} \\
     		&\le \varepsilon +\left( 1-\sqrt{1-4C_{1}C_{2}} \right)\sup\limits_{\left [ t-N,t \right ]\setminus B}\left \| u\left ( \cdot+t_{n}\right ) -\tilde{w}\left( \cdot\right)  \right \| _{X^{r,q}},
     	\end{split}
     \end{displaymath}
 and thus
 \begin{displaymath}
 	\sup\limits_{\left [ t-N,t \right ]\setminus B}\left \| u\left ( \cdot+t_{n}\right ) -\tilde{w}\left( \cdot\right)  \right \| _{X^{r,q}} \le \frac{\varepsilon}{\sqrt{1-4C_{1}C_{2}}},
 \end{displaymath}
which implies
\begin{displaymath}
	\lim\limits_{n \to \infty}\left \| u\left ( t+t_{n},x\right ) -Q\tilde{u}\left( t,Q^{-1}x\right)  \right \| _{X^{r,q}} =0.
\end{displaymath}
Similarly, we have
\begin{displaymath}
	\lim\limits_{n \to \infty}\left \| \tilde{u}\left ( t-t_{n},x\right ) -Qu\left( t,Q^{-1}x\right)  \right \| _{X^{r,q}} =0.
\end{displaymath}
Then $ u\left( t, x\right)  $ is the spiral-like almost automorphic mild solution. This completes the proof.
\end{proof}

\bibliographystyle{plain}
\bibliography{ref} 

\end{document}